\definecolor{codegreen}{rgb}{0,0.6,0}
\definecolor{codegray}{rgb}{0.5,0.5,0.5}
\definecolor{codepurple}{rgb}{0.58,0,0.82}
\definecolor{backcolour}{rgb}{0.95,0.95,0.92}
\lstdefinestyle{mystyle}{
	backgroundcolor=\color{backcolour},   
	commentstyle=\color{codegreen},
	keywordstyle=\color{magenta},
	numberstyle=\footnotesize\color{codegray},
	stringstyle=\color{codepurple},
	basicstyle=\ttfamily\small,
	breakatwhitespace=false,         
	breaklines=true,                 
	captionpos=b,                    
	keepspaces=true,                 
	numbers=left,                    
	numbersep=5pt,                  
	showspaces=false,                
	showstringspaces=false,
	showtabs=false,                  
	tabsize=2
}
\definecolor{seagreen}{rgb}{0.18, 0.55, 0.34}
\definecolor{mediumviolet-red}{rgb}{0.78, 0.08, 0.52}
\definecolor{khaki}{rgb}{0.94, 0.9, 0.55}
\lstdefinelanguage{mypython}
{
	keywords=[1]{from, import, assert, not, print},
	keywordstyle=[1]{\color{mediumviolet-red}},
	keywords=[2]{surecr, torch, cp, lo, pl},
	keywordstyle=[2]{\color{seagreen}},
	numbers=none,
	upquote=true,
	showstringspaces=false,
	basicstyle=\ttfamily,
	columns=fullflexible,
	keepspaces=true,
	emph={True,False,as,def,return,float,class,match,switch,len},
	emphstyle={\color{seagreen}},
	frame=trBL,
	belowskip=1em,
	aboveskip=1em,
	captionpos=b
}
\crefname{equation}{}{}
\crefname{chapter}{Chapter}{Chapters}
\crefname{item}{item}{items}
\crefname{figure}{Figure}{Figures}
\crefname{theorem}{Theorem}{Theorems}
\crefname{lemma}{Lemma}{Lemmas}
\crefname{proposition}{Proposition}{Propositions}
\crefname{corollary}{Corollary}{Corollarys}
\crefname{definition}{Definition}{Definitions}
\crefname{fact}{Fact}{Facts}
\crefname{example}{Example}{Examples}
\crefname{algorithm}{Algorithm}{Algorithms}
\crefname{remark}{Remark}{Remarks}
\crefname{note}{Note}{Notes}
\crefname{notation}{Notation}{Notations}
\crefname{case}{Case}{Cases}
\crefname{exercise}{Exercise}{Exercises}
\crefname{question}{Question}{Questions}
\crefname{claim}{Claim}{Claims}
\crefname{enumi}{}{}
\numberwithin{equation}{section}
\NewDocumentCommand{\lplabel}{o m}{%
	\makebox[0pt][r]{#2\hspace*{2em}}%
	\IfNoValueF{#1}
	{\def\@currentlabel{#2}\ltx@label{#1}}
}
\theoremstyle{plain}
\newtheorem{theorem}{Theorem}[section]
\newtheorem{fact}{Fact}[section]
\newtheorem{lemma}{Lemma}[section]
\theoremstyle{definition}
\newtheorem{example}{Example}[section]
\newcommand{\subjectto}{\ensuremath{\operatorname{subject~to}}}
\newcommand{\minimize}{\ensuremath{\operatorname{minimize}}}
\newcommand{\maximize}{\ensuremath{\operatorname{maximize}}}
\newcommand{\Pro}{\ensuremath{\operatorname{P}}}
\newcommand{\sign}{{\mbox{\bf sign}}} 
\newcommand{\symm}{{\mbox{\bf S}}} 
\providecommand{\abs}[1]{\left|#1\right|}
\providecommand{\norm}[1]{\left\lVert#1\right\rVert}
\providecommand{\innp}[1]{\left\langle#1\right\rangle}
\begin{document}

\title{Alternating Subgradient Methods for Convex-Concave Saddle-Point Problems}

\author{
	 Hui Ouyang\thanks{Department of Electrical Engineering, Stanford University}
}
\date{May 24, 2023}

\maketitle

\begin{abstract}
We propose   an alternating subgradient method with non-constant step sizes for
solving convex-concave saddle-point problems 
associated with general convex-concave functions.
We assume that the sequence of our step sizes is not summable but square summable. 
Then under the popular assumption of uniformly bounded subgradients, we prove
that a sequence of convex combinations of function values over our iterates
converges to the value of the function at a saddle-point. 
Additionally, 
based on our result regarding the boundedness of the sequence of our iterates,
we show that a sequence of the function evaluated at convex combinations of 
our iterates  also converges to the value of the function over a saddle-point.

We implement   our algorithms in  examples of 
a linear program in inequality form, 
a least-squares problem with $\ell_{1}$ regularization,
a matrix game, and a robust Markowitz portfolio construction problem.
To accelerate convergence, we reorder  the sequence of step sizes
in descending order, which turned out to work very-well  in our examples.
Our convergence results are confirmed by our numerical experiments.
Moreover, we also numerically compare our iterate scheme with iterates schemes
associated with constant step sizes. 
Our numerical results support  our choice of step sizes. 
Additionally,  we observe the convergence of the sequence of function values
over our iterates in multiple experiments, 
which currently lacks theoretical support.
 
\end{abstract}


 \section{Introduction}
In the whole work,  
 $\mathcal{H}_{1}$ and $\mathcal{H}_{2}$ are Hilbert spaces and   $X \subseteq 
 \mathcal{H}_{1}$ and $Y \subseteq \mathcal{H}_{2}$ 
 are nonempty closed and convex sets. 
 The Hilbert direct sum $\mathcal{H}_{1} \times \mathcal{H}_{2}$ of  
 $\mathcal{H}_{1}$ and $\mathcal{H}_{2}$ is equipped with the inner product 
\begin{align}
 \label{eq:innerproduct}
 	(\forall (x, y) \in X \times Y ) (\forall (u,v) \in X \times Y) \quad \innp{(x,y), (u,v)} = 
 	\innp{x,u} +\innp{y,v}
\end{align}
 and the induced norm
\begin{align}\label{eq:norm}
 	(\forall (x, y) \in X \times Y ) \quad \norm{(x, y) }^{2} =\innp{(x,y), (x,y)}=\innp{x,x} 
 	+\innp{y,y} =\norm{x}^{2} + \norm{y}^{2}. 
\end{align}
 
 Throughout this work, $f : X \times Y \to \mathbf{R} \cup \{ - \infty, + \infty \}$ satisfies 
 that 
 $(\forall y \in Y)$ $f(\cdot, y) : X \to \mathbf{R}  \cup \{ + \infty\}$ is proper and convex, 
 and  that 
 $(\forall x \in X)$ $f(x, \cdot) : Y \to \mathbf{R}  \cup \{ - \infty\}$ is proper and concave. 
 (It is referred to as \emph{convex-concave function} from now on.)
 
In this work, we aim to solve the following \emph{convex-concave saddle-point 
problem}
\begin{align}\label{eq:problem}
 	\underset{x \in X}{\minimize} ~\,	\underset{y \in Y}{\maximize}  ~f(x,y).
\end{align}
We assume that $(\forall (\bar{x}, \bar{y}) \in X \times Y)$ 
 $\partial_{x}f(\bar{x}, \bar{y}) \neq \varnothing$ and 
 $\partial_{y} (- f(\bar{x},  \bar{y})) \neq \varnothing$. 
 We also suppose that  the solution set of \cref{eq:problem} is nonempty, 
 that is, there exists a \emph{saddle-point} $(x^{*}, y^{*}) \in X \times Y$ of $f$
 satisfying 
\begin{align}\label{eq:solution}
(\forall x \in X) (\forall y \in Y) \quad f(x^{*}, y)  \leq  f(x^{*}, y^{*})  \leq f(x, y^{*}).
\end{align}

 \subsection{Related work}
 Convex-concave saddle-point problems arise in a wide range of applications such 
 as resource allocation problems for networked-systems, game theory, 
 finance, robust and minimax optimization, image processing, 
 generative adversarial networks, adversarial training, robust optimization, 
 primal-dual reinforcement learning, and   machine learning   
 (see, e.g., 
 \cite{NedicOzdaglar2009}, 
 \cite{BV2004}, \cite{SLB2023}, \cite{ChambollePock2011},  
 \cite{WandLi2020improved}, and \cite{ZhangWangLessardGrosse2022}  for 
 details).
 Based on the easy implementation, low memory requirement, 
 and  low barrier for  usage, 
 the subgradient method is one of the most popular methods for solving 
convex-concave saddle-point problems.
 
 For interested readers, 
 we recommend \cite[Section~1]{NedicOzdaglar2009} with a summary of various 
 subgradient methods for solving convex-concave saddle-point problems 
and \cite[Section~3]{WandLi2020improved} with a list of different
algorithms and literature on solving special cases or more general versions of 
convex-concave saddle-point problems.
Among the literature on subgradient methods for solving convex-concave
saddle-point problems,
the author in \cite{Nesterov2009} proposed a primal-dual subgradient method with 
two control sequences 
(one aggregates the support functions in the dual space, and the other 
establishes a dynamically updated scale between the primal and dual spaces) 
for different types of nonsmooth problems with convex structure. 
Moreover, the author provides a variant of the proposed subgradient scheme
for convex-concave saddle-point problems in  \cite[Section~4]{Nesterov2009},
and shows an upper bound on a sequence constructed by their iterates
 under some assumptions, including the existence of certain strongly convex 
 prox-functions and the uniform boundedness of subgradients.
In addition, the authors of  \cite{SLB2023} worked on saddle problems
including the partial supremum or infimum of convex-concave functions,
which is a more general version of the convex-concave saddle-point problem
considered in this work.
They applied the language and methods presented in 
\cite{JuditskyNemirovski2022},
 used the idea of conic-representable saddle-point programs to automatically 
 reduce a saddle problem to a single convex optimization problem,
and developed an open-source package called DSP for
users to easily formulate and solve saddle problems. 

\subsection{Comparison with related work} 
Main theoretical results in this work are mainly inspired by  
\cite[Section~3]{NedicOzdaglar2009} by Nedi\'{c} and  Ozdaglar and 
\cite[Section~6]{BoydSGMNotes} by Boyd. 
In \cite[Section~3]{NedicOzdaglar2009}, the authors worked on a subgradient 
algorithm with an averaging scheme for generating approximate saddle-points of 
a convex-concave function. They also showed the convergence of function values 
at iterate averages to the the function value at a saddle-point, 
with the error level being a function of the step-size value,
under some assumptions regarding the boundedness of the sequence of iterates or 
compactness of related sets.  
\cite{BoydSGMNotes} is a lecture note by Boyd for  the course EE364b in Stanford 
University, which covers various subgradient methods and techniques of their 
convergence proofs. 
Convex-concave saddle-point problems are not considered 
in \cite{BoydSGMNotes} yet. 
Here, we apply some convergence proofs techniques
and popular assumptions on sequences of step sizes,
as presented in \cite{BoydSGMNotes}. 
We state main differences between this work and \cite{NedicOzdaglar2009} 
below.
\begin{itemize}
	\item Our iterate scheme \cref{eq:algorithm} replaces the constant 
	step-size $\alpha$ in the iterate scheme worked in \cite{NedicOzdaglar2009} 
	by a sequence $(t_{k})_{k \in 	\mathbf{N}}$.
	\item Under some  boundedness of the sequence of iterates or compactness of 
	related sets, 
	authors in \cite[Section~3]{NedicOzdaglar2009} considered   the 
	convergence of $\frac{1}{ k+1}	\sum^{k}_{i =0}  f(x^{i}, y^{i}) \to f(x^{*}, y^{*}) $  
	and 
	$f(\tfrac{1}{k}\sum^{k-1}_{i=0} x^{i}, \tfrac{1}{k}\sum^{k-1}_{i=0} y^{i}) \to 
	f(x^{*},y^{*}) $,  
	where $(x^{*},y^{*}) $ is a saddle-point of $f$ and  $\left((x^{k}, y^{k})\right)_{k 
		\in 	\mathbf{N}}$    
	is generated by the iterate scheme \cref{eq:algorithm} below with $(\forall k \in 
	\mathbf{N})$ $t_{k} 
	\equiv \alpha \in \mathbf{R}_{++}$. 
	
	In this work, we directly establish the boundedness of the sequence 
	$((x^{k}, y^{k}))_{k \in \mathbf{N}}$
	of iterates generated by our scheme \cref{eq:algorithm}. 
	So without any assumption  on the boundedness of the sequence of iterates or 
	compactness  of related sets, 
	we prove $	\sum^{k}_{i =0} \frac{t_{i}}{ \sum^{k}_{j=0} t_{j}} f(x^{i}, y^{i}) \to  
	f(x^{*}, y^{*}) $ in \cref{theorem:sumconverge}
	and $f\left( \sum^{k}_{i =0} \frac{t_{i}}{ \sum^{k}_{j=0} t_{j}} x^{i} , 
	\sum^{k}_{i =0} \frac{t_{i}}{ \sum^{k}_{j=0} t_{j}} y^{i}\right) \to  f(x^{*}, y^{*})$ in 
	\cref{theorem:xhatkyhatkConverge}.
	\item In  \cite{NedicOzdaglar2009}, after section~3, by replacing the second	
	projector in their original iterate scheme with a projector onto a compact convex set 
	containing the set of dual optimal solutions, 
	the authors introduced their primal-dual subgradient method;
	moreover, under some standard Slater constraint qualification and
	uniformly boundedness assumption of related subgradients,
	 the authors theoretically  studied the estimate on the convergence 
	rate of generated primal sequences for finding approximate primal-dual optimal 
	solutions as approximate saddle points of the Lagrangian function of a convex 
	constrained optimization problem. 
	
	Although we don't theoretically work on finding saddle-points of
	Lagrangian  (which are special cases of convex-concave functions), 
	we numerically apply our algorithm to  Lagrangian functions 
	of linear program in inequality form 
	and of least-squares problem with $\ell_{1}$ regularization in
	\cref{subsection:LP,subsection:LSl1}. 
	Furthermore,  in \cref{subsection:matrixgame,subsection:rmpcp}, 
	we also numerically confirm our theoretical results with problems of
	matrix game and robust Markowitz portfolio construction problem.
	In addition, we also apply our algorithm to
	 another Lagrangian of an easy constrained convex problem  in 
	 \cref{subsection:toyexample}
	to show the importance or benefit of replacing the constant step-size $\alpha$  
	with a sequence $(t_{k})_{k \in 	\mathbf{N}}$ 
	satisfying some popular constraints of step sizes.
\end{itemize}

  \subsection{Outline}
 
 We mainly present some examples of convex-concave saddle-point problems
 in \cref{section:Preliminaries}. 
 In \cref{section:ASM}, we introduce our alternating subgradient methods
 for convex-concave saddle-point problems associated with
 a general convex-concave function. 
 Then in the same section, we prove two desired results on
 the convergence to the value of function over a saddle-point. 
 In \cref{section:NumericalExperiments}, 
 we implement our algorithm and compare it to some other 
 schemes of iterates in examples of the following problems:
 linear program in inequality form, 
 least-squares problem with $\ell_{1}$ regularization,
 matrix game, and robust Markowitz portfolio construction problem.
 To accelerate convergence, in our numerical experiments,
 we reorder the sequence $(t_{k})_{k \in \mathbf{N}}$ of step sizes
 in descending order, which turns out to work very-well  in our examples.
We  sum up our work in \cref{section:conclusion}.

 \section{Preliminaries} \label{section:Preliminaries}
We point out some notation used in this work below.  
$\mathbf{R}$, $\mathbf{R}_{+}$, $\mathbf{R}_{++}$, and $\mathbf{N}$  are the set 
of all real numbers, the set of all nonnegative real numbers, 
the set of all positive real numbers, 
and the set of all nonnegative integers, respectively. 
Let $C$ be a nonempty  closed and convex subset of $\mathcal{H}$.  
The \emph{projector} (or \emph{projection operator}) onto $C$ is the operator, 
denoted by $\Pro_{C}$,  that maps every point in $\mathcal{H}$ to its unique 
projection onto $C$, 
that is, $(\forall x \in \mathcal{H})$ $\norm{x - \Pro_{C}x} = \inf_{c \in C} 
\norm{x-c}$.  
Let $\mathcal{H}$ be a Hilbert space and let $g: \mathcal{H} \to \left]-\infty, 
+\infty\right]$ be proper. The \emph{subdifferential of $g$} is the set-valued operator 
\[ 
	\partial g: \mathcal{H} \to 2^{\mathcal{H}}: x \mapsto \{ u \in \mathcal{H} ~:~ 
	(\forall y 
	\in \mathcal{H})~ \innp{u, y-x} + f(x) \leq f(y) \}.
\]

The following result is well-known and will be used several times later. For 
completeness, we show some details below. 
\begin{fact} \label{fact:saddlepoint}
	Let $(\bar{x}, \bar{y}) \in X \times Y$. The following statements are equivalent. 
	\begin{enumerate}
		\item  \label{fact:saddlepoint:sp} $(\bar{x}, \bar{y})$ is a saddle-point of the 
		function $f$.
		\item  \label{fact:saddlepoint:leq} $(\forall (x,y) \in X \times Y)$ $f(\bar{x},y)- 
		f(x,\bar{y}) \leq 0$.
		\item  \label{fact:saddlepoint:partial} $0 \in \partial_{x} f(\bar{x}, \bar{y}) $ and 
		$0 
		\in \partial_{y} (-f(\bar{x}, \bar{y})) $.
	\end{enumerate}
\end{fact}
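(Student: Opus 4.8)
The plan is to establish the two equivalences \ref{fact:saddlepoint:sp}$\Leftrightarrow$\ref{fact:saddlepoint:leq} and \ref{fact:saddlepoint:sp}$\Leftrightarrow$\ref{fact:saddlepoint:partial} separately, since \ref{fact:saddlepoint:leq} and \ref{fact:saddlepoint:partial} are each most naturally compared against the defining inequalities \cref{eq:solution} of a saddle-point.

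For \ref{fact:saddlepoint:sp}$\Leftrightarrow$\ref{fact:saddlepoint:leq}, I would argue directly from \cref{eq:solution}. Assuming \ref{fact:saddlepoint:sp}, chaining the two inequalities $f(\bar{x},y) \leq f(\bar{x},\bar{y}) \leq f(x,\bar{y})$, which hold for every $(x,y) \in X \times Y$, immediately yields $f(\bar{x},y) - f(x,\bar{y}) \leq 0$, i.e.\ \ref{fact:saddlepoint:leq}. Conversely, assuming \ref{fact:saddlepoint:leq}, I would recover the two one-sided inequalities of \cref{eq:solution} by specialization: setting $x = \bar{x}$ gives $f(\bar{x},y) \leq f(\bar{x},\bar{y})$ for all $y \in Y$, and setting $y = \bar{y}$ gives $f(\bar{x},\bar{y}) \leq f(x,\bar{y})$ for all $x \in X$. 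This direction is essentially free, and I do not expect any obstruction here.

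The equivalence \ref{fact:saddlepoint:sp}$\Leftrightarrow$\ref{fact:saddlepoint:partial} is the substantive part, and it rests on Fermat's rule: for a proper function $g$ on a Hilbert space, $0 \in \partial g(\bar{z})$ holds if and only if $\bar{z}$ is a global minimizer of $g$. This follows at once from the definition of the subdifferential, since taking the subgradient $u = 0$ reduces the defining inequality to $(\forall z)\; g(\bar{z}) \leq g(z)$. Applying this to the convex function $f(\cdot,\bar{y})$ shows that $0 \in \partial_{x} f(\bar{x},\bar{y})$ is equivalent to $\bar{x}$ minimizing $f(\cdot,\bar{y})$ over $X$, i.e.\ $f(\bar{x},\bar{y}) \leq f(x,\bar{y})$ for all $x \in X$. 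Similarly, applying it to the convex function $-f(\bar{x},\cdot)$ shows that $0 \in \partial_{y}(-f(\bar{x},\bar{y}))$ is equivalent to $\bar{y}$ maximizing $f(\bar{x},\cdot)$ over $Y$, i.e.\ $f(\bar{x},y) \leq f(\bar{x},\bar{y})$ for all $y \in Y$. Conjoining these two statements reproduces exactly the pair of inequalities \cref{eq:solution} defining a saddle-point, which gives \ref{fact:saddlepoint:partial}$\Leftrightarrow$\ref{fact:saddlepoint:sp}.

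The only point requiring care, and the main (mild) obstacle, is the bookkeeping around the constraint sets: the partial functions $f(\cdot,\bar{y})$ and $-f(\bar{x},\cdot)$ must be read as their extensions to $\mathcal{H}_{1}$ and $\mathcal{H}_{2}$ by $+\infty$ outside $X$ and $Y$, so that Fermat's rule delivers minimization over $X$ (respectively, over $Y$) rather than over the whole space. Since $\bar{x} \in X$, $\bar{y} \in Y$, and these sets are closed and convex, the extended functions remain proper and convex, so the global-minimizer characterization transfers cleanly to the constrained minimization at hand. Notably, no compactness, differentiability, or boundedness hypotheses enter this equivalence.
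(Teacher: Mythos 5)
Your proof is correct and takes essentially the same route as the paper: (i)$\Leftrightarrow$(ii) by direct chaining and specialization in \cref{eq:solution}, and (i)$\Leftrightarrow$(iii) by observing that the subgradient $u=0$ in the definition of the subdifferential (your ``Fermat's rule'') reproduces exactly the two saddle-point inequalities. Your remark about extending the partial functions by $+\infty$ outside $X$ and $Y$ is sound bookkeeping that the paper leaves implicit by simply quantifying over $X$ and $Y$ in the subgradient inequalities.
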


\begin{proof}
	\cref{fact:saddlepoint:sp}  $\Leftrightarrow$ \cref{fact:saddlepoint:leq}:  This is 
	trivial 
	by recalling the definition \cref{eq:solution} of the saddle-point.
	
	\cref{fact:saddlepoint:sp}  $\Leftrightarrow$ \cref{fact:saddlepoint:partial}: 
	According 
	to the definition of saddle-point \cref{eq:solution}, we observe that 
	\cref{fact:saddlepoint:sp}   is equivalent to the following
\[
		(\forall x \in X)~ \innp{0, x-\bar{x}} + f(\bar{x}, \bar{y})  \leq  f(x, \bar{y}) \text{ 
		and } 
		(\forall y \in Y) ~\innp{0, y-\bar{y}} + (-f(\bar{x}, \bar{y})) \leq  -f(\bar{x}, y),
 \]
	which is exactly \cref{fact:saddlepoint:partial}.
\end{proof}

We present some examples of saddle-points below.
\begin{example}  \label{example:toy}
	Let $(\forall i \in \{1,2\})$ $f_{i}: \mathcal{H}_{i} \to \mathbf{R}$ be proper, convex, 
	and differentiable function. Define $f: \mathcal{H}_{1} \times \mathcal{H}_{2} \to 
	\mathbf{R}$ as
 $(\forall (x,y) \in \mathcal{H}_{1} \times \mathcal{H}_{2})$ $f(x,y) = f_{1}(x) 
 -f_{2}(y)$. 
 Let  $(\bar{x}, \bar{y}) \in  \mathcal{H}_{1} \times \mathcal{H}_{2}$.  Then, by 
 \cref{fact:saddlepoint},  $(\bar{x}, \bar{y})$ is a saddle-point of the function $f$ if 
 and 
 only if $0=\nabla f_{1}(\bar{x})$ and $0=\nabla f_{2}(\bar{y})$.   
\end{example}
 
 We shall revisit examples below in \cref{section:NumericalExperiments} 
 on numerical experiments. 
\begin{example}[Lagrange duality]  \label{example:lagrange}
Note that when strong duality holds,
a pair of primal and dual optimal points turns out to be a saddle-point;
and  the value of the Lagrangian over a saddle-point equals to 
both the optimal solutions of corresponding primal and dual problems. 
We display Lagrangian of some problems for which strong duality obtains below. 
		\begin{enumerate}
		\item  \label{example:lagrange:toy} \cite[Exercise~5.1]{BV2004}  
		Consider the primal problem
\begin{align*}
			&\minimize~x^2 +1 \\
		\lplabel{(P$_{1}$)}	&\subjectto~(x-2)(x-4) \leq 0,		
\end{align*}
with variable $x \in \mathbf{R}$. 
The corresponding Lagrangian $L : \mathbf{R} \times \mathbf{R}_{+}  \to \mathbf{R}$ is 
\[
		\lplabel{(L$_{1}$)}	 
		L(x,y) = x^2 +1  +y (x-2)(x-4) = x^2 (1+y) - 6xy +8y+1,
 \]
	and the dual problem is 
\begin{align*}
		&\maximize~10-y - \frac{9}{y+1}\\
		\lplabel{(D$_{1}$)}	&\subjectto~y \geq 0,
\end{align*}
with variable $y \in \mathbf{R}$.
It is easy to know that 
the primal and dual optimal points are $x^{*} =2$ and $y^{*} =2$, respectively, 
and that the primal and dual optimal values are $p^{*}=d^{*} =5$. 
Then we deduce that $(x^{*}, y^{*}) =(2,2)$ is a saddle-point of the function $L(x,y)$ 
and that 
\[ 
\sup_{y \in \mathbf{R}_{+}} \inf_{x \in \mathbf{R}} L(x,y) = L(2,2)=5 
= \inf_{x \in \mathbf{R}} \sup_{y \in \mathbf{R}_{+}}L(x,y).
\]
		\item  \label{example:lagrange:lp}
		\cite[Section~5.2.1]{BV2004} Consider the inequality form LP
\begin{align*}
			&\minimize~c^{T}x \\
			\lplabel{(P$_{2}$)}	&\subjectto~Ax \leq b,		
\end{align*}
	where $x \in\mathbf{R}^{n}$ is the variable, and  $A \in \mathbf{R}^{m \times n}$, $c 
	\in 
	\mathbf{R}^{n}$, and $b  \in \mathbf{R}^{m}$.
As it is stated on \cite[Page~225]{BV2004}, 
the  Lagrangian $L : \mathbf{R}^{n}  \times \mathbf{R}^{m}_{+}  \to \mathbf{R}$ is 
\[  
\lplabel{(L$_{2}$)}	 
L(x,y) = c^{T}x + y^{T}(Ax-b) = -b^{T}y + (A^{T} y+c)^{T}x = y^{T}Ax +c^{T}x   -b^{T}y,
\]
and the dual problem is 
\begin{align*}
			&\maximize~-b^{T}y\\
			\lplabel{(D$_{2}$)}	&\subjectto~y \geq 0\\
			&\quad \quad \quad \quad \quad A^{T} y+c =0,
\end{align*}
with variable $y \in \mathbf{R}^{m}$.
Suppose that $x^{*}$ and  $ y^{*}$ are the optimal points of (P$_{2}$) and 
(D$_{2}$), respectively. 
It is clear that Slater's condition holds, so
$(x^{*}, y^{*})$ is a saddle-point of the function $L(x,y)$ and that 
\[  
\sup_{y \in \mathbf{R}^{m}_{+}} \inf_{x \in \mathbf{R}^{n}} L(x,y) = L(x^{*}, y^{*} )
= \inf_{x \in \mathbf{R}^{n}} \sup_{y \in \mathbf{R}^{m}_{+}}L(x,y).
\]
		\item  \label{example:lagrange:lsl1}
\cite[Exercise~5.34(b)]{BV2004Additional} Consider the following least-squares 
problem with $\ell_{1}$ regularization
\[  
 \minimize~\frac{1}{2} \norm{Ax -b}^{2}_{2} + \gamma \norm{x}_{1}
\]
where $x \in \mathbf{R}^{n}$ is the variable, and $A \in \mathbf{R}^{m \times n}$, $b  
\in \mathbf{R}^{m}$, 
and $\gamma \in  \mathbf{R}_{++}$.
Clearly, it is equivalent to 
\begin{align*}
&\minimize~\frac{1}{2} \norm{u}^{2}_{2} + \gamma \norm{x}_{1}\\
\lplabel{(P$_{3}$)}	&\subjectto~u=Ax -b.
\end{align*}
The corresponding Lagrangian $L :\mathbf{R}^{n} \times \mathbf{R}^{m}  \times 
\mathbf{R}^{m}   \to \mathbf{R}$ is 
\begin{align*}
\lplabel{(L$_{3}$)}	 
L(x,u,y) =& \frac{1}{2} \norm{u}^{2}_{2} + \gamma \norm{x}_{1} + y^{T}( Ax 
-b -u)\\
 =& y^{T} \begin{pmatrix}
				A &-I
			\end{pmatrix} 
		\begin{pmatrix}
			x \\u
		\end{pmatrix} 
	+\frac{1}{2} \norm{u}^{2}_{2} + \gamma \norm{x}_{1} - y^{T}b.
\end{align*}
	Note that 
\begin{align*}
		\inf_{x  \in \mathbf{R}^{n},u  \in \mathbf{R}^{m}}  L(x,u,y) 
		 =& \inf_{x  \in \mathbf{R}^{n},u  \in 
		\mathbf{R}^{m}}  \frac{1}{2} \norm{u}^{2}_{2} + \gamma \norm{x}_{1} + y^{T}( Ax 
		-b 
		-u) \\
		 =& -b^{T}y + \inf_{x \in \mathbf{R}^{n}} \left( (A^{T}y)^{T}x + \gamma 
		\norm{x}_{1} 
		\right) +  \inf_{u \in \mathbf{R}^{m}} \left( -y^{T}u + \frac{1}{2} \norm{u}^{2}_{2} 
		\right), 
\end{align*}
that, by  \cite[Section~5.1.6 and Example~3.26]{BV2004}, 
\begin{align*}
	\inf_{x \in \mathbf{R}^{n}}  (A^{T}y)^{T}x + \gamma \norm{x}_{1}     =& - \gamma 
	\sup_{x \in \mathbf{R}^{n}} -\frac{1}{\gamma}(A^{T}y)^{T}x  - \norm{x}_{1}\\
	 =& \begin{cases}
		0 \quad & \text{if } \norm{-\frac{1}{\gamma} A^{T}y}_{\infty} \leq 1\\
		-\infty \quad & \text{otherwise},
	\end{cases}
\end{align*}
and that
\[ 
\inf_{u \in \mathbf{R}^{m}}   -y^{T}u + \frac{1}{2} \norm{u}^{2}_{2} 
= \frac{1}{2} \inf_{u \in \mathbf{R}^{m}}     \norm{u-y}^{2}_{2} - \frac{1}{2} 
\norm{y}^{2}_{2} 
= - \frac{1}{2} \norm{y}^{2}_{2}.    
\]
	We derive that
\[  
		\inf_{x  \in \mathbf{R}^{n},u  \in \mathbf{R}^{m}}  L(x,u,y)  = -b^{T}y + \frac{1}{2} 
		\norm{y}^{2}_{2},
\]
with $ \norm{A^{T}y}_{\infty} \leq \gamma$,
		and that the dual problem is 
\begin{align*}
			&\maximize~-b^{T}y + \frac{1}{2} \norm{y}^{2}_{2}\\
			\lplabel{(D$_{3}$)}	&\subjectto~\norm{A^{T}y}_{\infty} \leq \gamma,
\end{align*}
with variable $y \in \mathbf{R}^{m}$. Suppose that $(x^{*},u^{*})$ and  $ y^{*}$ are 
the optimal points of (P$_{3}$) and (D$_{3}$), respectively. In view of Slater's theorem,  
$((x^{*},u^{*}), y^{*})$ is s a saddle-point of the function $L((x,u),y)$ and that 
\[
			\sup_{y \in \mathbf{R}^{m}} \inf_{x \in \mathbf{R}^{n},u  \in \mathbf{R}^{m}} 
			L((x,u),y) = 
			L((x^{*},u^{*}), y^{*})= \inf_{x \in \mathbf{R}^{n} ,u  \in \mathbf{R}^{m}} \sup_{y 
			\in 
			\mathbf{R}^{m}}L((x,u),y).
\]
	\end{enumerate}
\end{example}

\section{Alternating subgradient methods} \label{section:ASM}
Results in this section are inspired by \cite[Section~3]{NedicOzdaglar2009} 
by Nedi\'{c} and  Ozdaglar and  \cite{BoydSGMNotes} by Boyd 
for the course EE364b in Stanford University. 
In particular, the technique in the proof of \cref{theorem:sumconverge} mimics 
that in \cite[Section~3]{NedicOzdaglar2009}.  
The authors in \cite[Section~3]{NedicOzdaglar2009} considered   the 
 convergence of 
 $\frac{1}{ k+1}	\sum^{k}_{i =0}  f(x^{i}, y^{i}) \to f(x^{*}, y^{*}) $  and 
 $f(\tfrac{1}{k}\sum^{k-1}_{i=0} x^{i}, \tfrac{1}{k}\sum^{k-1}_{i=0} y^{i}) \to 
 f(x^{*},y^{*}) $ 
 under some assumptions including boundedness or compactness, 
 where $(x^{*},y^{*}) $ is a saddle-point of $f$ and  
 $\left((x^{k}, y^{k})\right)_{k \in 
 \mathbf{N}}$    
 is generated by \cref{eq:algorithm} below with $(\forall k \in \mathbf{N})$ $t_{k} 
 \equiv \alpha \in \mathbf{R}_{++}$.
In addition, the proof of the boundedness of the iteration sequence $\left((x^{k}, 
y^{k})\right)_{k \in \mathbf{N}}$  in \cref{lemma:xkykbounded}
 is motivated from \cite[Section~6]{BoydSGMNotes}, 
 which is essential to our convergence result in \cref{theorem:xhatkyhatkConverge}.

\subsection{Algorithm} 
Let $(x^{0}, y^{0}) \in X \times Y$. We consider the sequence of iterations generated by
\begin{align}\label{eq:algorithm}
	 (\forall k \in \mathbf{N}) \quad 
	 x^{k+1} = \Pro_{X}(x^{k} - t_{k} g_{k})  \quad \text{and} \quad y^{k+1} 
	 = \Pro_{Y}(y^{k} - t_{k} h_{k}),  
\end{align}
where $\Pro_{X}$ and $\Pro_{Y}$ are projectors onto $X$ and $Y$, respectively,   
$ (\forall k \in \mathbf{N})$  $g_{k} \in \partial_{x}f(x^{k},y^{k}) $ 
and $h_{k} \in \partial_{y}(- f(x^{k},y^{k}) )$, 
and $(\forall k \in \mathbf{N} \smallsetminus \{0\})$ $t_{k} \in \mathbf{R}_{+}$ 
and $t_{0} \in \mathbf{R}_{++}$.

Let's revisit the Lagrangian function of 
a least-squares problem with $\ell_{1}$ regularization,
presented in \cref{example:lagrange}\cref{example:lagrange:lsl1}, 
and illustrate the algorithm \cref{eq:algorithm} on this particular function.
\begin{example}  \label{example:lsl1:subgradient}
Consider the convex-concave function $f: \mathbf{R}^{n+m} \times \mathbf{R}^{m} \to 
\mathbf{R}$ 
defined as 
\[ 
(\forall ((x,u),y) \in \mathbf{R}^{n+m} \times \mathbf{R}^{m} )	\quad  f((x,u),y) 
= \frac{1}{2} \norm{u}^{2}_{2} + \gamma \norm{x}_{1} + y^{T}( Ax -b -u).
\] 
In view of \cite[Section~3.4]{BoydSubgradientsNotes},
\[ 
	(\forall x \in \mathbf{R}^{n}) \quad \sign(x) \in \partial \norm{x}_{1}, \quad 
	\text{where} \quad (\forall i \in \{1,\ldots,n\}) ~(\sign(x))_{i} = \begin{cases}
		1 \quad &\text{if } x_{i} >0\\
		0 \quad &\text{if } x_{i} =0\\
		-1 \quad &\text{if } x_{i} <0.
	\end{cases}
\]
Therefore, it is easy to see that in this case, the algorithm \cref{eq:algorithm} is 
simply 
that $((x_{0}, Ax_{0}-b), y_{0}))\in \mathbf{R}^{n+m} \times \mathbf{R}^{m}$ and for 
every 
$k \in 
\mathbf{N}$,
\begin{align*}
	(x^{k+1}, u^{k+1})  =& (x^{k},u^{k}) - t_{k} \left(A^{T}y^{k} + \gamma 
	\sign(x^{k}), 
	-y^{k}+u^{k}\right)\\
	  y^{k+1}  =& y^{k} -t_{k} \left(-Ax^{k}+u^{k}+b \right).
\end{align*}
\end{example}

\subsection{Convergence results}
Henceforth, $((x^{k},y^{k}))_{k \in \mathbf{N}}$ is constructed by our 
iterate scheme \cref{eq:algorithm}; moreover, 
\[
(\forall k \in \mathbf{N}) \quad  
\hat{x}_{k} = \sum^{k}_{i =0} \frac{t_{i}}{ \sum^{k}_{j=0} t_{j}} x^{i} 
\text{ and }
  \hat{y}_{k} = \sum^{k}_{i =0} \frac{t_{i}}{ \sum^{k}_{j=0} t_{j}} y^{i}, 
\]
where $(\forall k \in \mathbf{N} \smallsetminus \{0\})$ $t_{k} \in \mathbf{R}_{+}$ 
and $t_{0} \in \mathbf{R}_{++}$ are step sizes used in our iterate 
scheme \cref{eq:algorithm}.

Suppose $(x^{*}, y^{*}) \in X \times Y$  is a saddle point of 
a convex-concave function $f$.
In this subsection, we present convergence results 
\cref{theorem:sumconverge,theorem:xhatkyhatkConverge}
on $\sum^{k}_{i =0} \frac{t_{i}}{ \sum^{k}_{j=0} t_{j}} f(x^{i}, y^{i}) 
\to  f(x^{*}, y^{*}) $
and $f( \hat{x}_{k} ,\hat{y}_{k}) \to  f(x^{*}, y^{*})$, respectively. 
To this end, we need the following auxiliary results. 
\begin{lemma} \label{lemma:xkyk}
Let $x \in X$ and $y \in Y$. The following results hold. 
\begin{enumerate}
\item \label{lemma:xkyk:tk} For every $k \in \mathbf{N}$,
\begin{subequations}\label{eq:ineqtk}
\begin{align}
&t_{k} (f(x^{k}, y^{k}) - f(x,y^{k})) \leq 
\tfrac{1}{2} \left(\norm{x^{k} -x}^{2} - \norm{x^{k+1} -x}^{2} \right) 
+ \tfrac{1}{2} t_{k}^{2} \norm{g_{k}}^{2}; \label{eq:ineqtk:x}\\
&t_{k} (f(x^{k}, y) - f(x^{k},y^{k})) \leq \tfrac{1}{2} \left(\norm{y^{k} -y}^{2}
 - \norm{y^{k+1} -y}^{2} \right) + \tfrac{1}{2} t_{k}^{2} \norm{h_{k}}^{2}. 
 \label{eq:ineqtk:y}
\end{align}
\end{subequations}
\item \label{lemma:xkyk:sum} 
For every $k \in \mathbf{N}$,
\begin{subequations} \label{eq:ineqhatxkhatyk}
\begin{align}
&  \sum^{k}_{i =0} \frac{t_{i}}{ \sum^{k}_{j=0} t_{j}} f(x^{i}, y^{i}) -  f(x,\hat{y}_{k} )
 \leq \frac{ \norm{x^{0} -x}^{2} - \norm{x^{k+1} -x}^{2} }{2 \sum^{k}_{j=0} t_{j}} 
 + \frac{ \sum^{k}_{i =0}  t_{i}^{2} \norm{g_{i}}^{2} }{2 \sum^{k}_{j=0} t_{j} };
  \label{eq:ineqhatxkhatyk:x}\\
& f( \hat{x}_{k} ,y) -   \sum^{k}_{i =0} \frac{t_{i}}{ \sum^{k}_{j=0} t_{j}} f(x^{i}, y^{i}) 
\leq  \frac{ \norm{y^{0} -y}^{2} - \norm{y^{k+1} -y}^{2} }{2 \sum^{k}_{j=0} t_{j}}
+ \frac{ \sum^{k}_{i =0}  t_{i}^{2} \norm{h_{i}}^{2} }{2 \sum^{k}_{j=0} t_{j} }. 
\label{eq:ineqhatxkhatyk:y}
\end{align}
\end{subequations}
\end{enumerate}	
\end{lemma}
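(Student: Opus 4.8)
The plan is to establish part~\cref{lemma:xkyk:tk} by the familiar projected-subgradient estimate applied separately in the $x$- and $y$-directions, and then to deduce part~\cref{lemma:xkyk:sum} by summing, telescoping, and invoking Jensen's inequality.

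First I would prove \cref{eq:ineqtk:x}. Starting from the update $x^{k+1} = \Pro_{X}(x^{k} - t_{k} g_{k})$ and using that the projector onto the nonempty closed convex set $X$ is nonexpansive and fixes the point $x \in X$, I obtain
\[
\norm{x^{k+1} - x}^{2} \leq \norm{x^{k} - t_{k} g_{k} - x}^{2} = \norm{x^{k} - x}^{2} - 2 t_{k} \innp{g_{k}, x^{k} - x} + t_{k}^{2} \norm{g_{k}}^{2}.
\]
Since $g_{k} \in \partial_{x} f(x^{k}, y^{k})$, the subgradient inequality for the convex function $f(\cdot, y^{k})$ gives $\innp{g_{k}, x^{k} - x} \geq f(x^{k}, y^{k}) - f(x, y^{k})$; substituting this and rearranging yields \cref{eq:ineqtk:x}. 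The inequality \cref{eq:ineqtk:y} is entirely symmetric: the same nonexpansiveness argument applied to $y^{k+1} = \Pro_{Y}(y^{k} - t_{k} h_{k})$, combined with the subgradient inequality for $h_{k} \in \partial_{y}(-f(x^{k}, y^{k}))$ — which reads $\innp{h_{k}, y^{k} - y} \geq f(x^{k}, y) - f(x^{k}, y^{k})$ — produces the claim.

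For part~\cref{lemma:xkyk:sum}, I would sum \cref{eq:ineqtk:x} over $i = 0, \ldots, k$. The bracketed norm terms telescope to $\norm{x^{0} - x}^{2} - \norm{x^{k+1} - x}^{2}$, and dividing through by $\sum^{k}_{j=0} t_{j}$ turns the weighted sums of function values into the averages appearing in the statement. The final move replaces $\sum^{k}_{i=0} \tfrac{t_{i}}{\sum^{k}_{j=0} t_{j}} f(x, y^{i})$ by $f(x, \hat{y}_{k})$: because the weights $t_{i}/\sum^{k}_{j=0} t_{j}$ are nonnegative and sum to one, $\hat{y}_{k}$ is a genuine convex combination of the $y^{i}$, so concavity of $f(x, \cdot)$ and Jensen's inequality give $f(x, \hat{y}_{k}) \geq \sum^{k}_{i=0} \tfrac{t_{i}}{\sum^{k}_{j=0} t_{j}} f(x, y^{i})$, which only strengthens the left-hand side and yields \cref{eq:ineqhatxkhatyk:x}. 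Inequality \cref{eq:ineqhatxkhatyk:y} follows identically: summing \cref{eq:ineqtk:y} and using convexity of $f(\cdot, y)$ so that $f(\hat{x}_{k}, y) \leq \sum^{k}_{i=0} \tfrac{t_{i}}{\sum^{k}_{j=0} t_{j}} f(x^{i}, y)$.

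The algebraic expansion of the squared norm and the telescoping are routine. The one place that needs care — and the only conceptual step — is the sign bookkeeping in the $y$-direction, where the subgradient is taken with respect to $-f$, together with the matching of the two Jensen inequalities to the convexity in $x$ and concavity in $y$, so that each bound tightens rather than loosens the stated estimate. Beyond this I expect no serious obstacle.
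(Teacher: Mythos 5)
Your proposal is correct and follows essentially the same route as the paper: nonexpansiveness of the projector fixing $x\in X$ (resp.\ $y\in Y$), expansion of the squared norm, the subgradient inequalities for $g_{k}\in\partial_{x}f(x^{k},y^{k})$ and $h_{k}\in\partial_{y}(-f(x^{k},y^{k}))$ with the correct sign handling, followed by summing, telescoping, dividing by $\sum^{k}_{j=0}t_{j}$, and applying convexity of $f(\cdot,y)$ and concavity of $f(x,\cdot)$ to the convex combinations. No gaps; your explicit remark that the weights $t_{i}/\sum^{k}_{j=0}t_{j}$ are nonnegative and sum to one merely makes the Jensen step more transparent than the paper's phrasing.
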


\begin{proof}
\cref{lemma:xkyk:tk}:	
Let $k \in \mathbf{N}$. Because $g_{k} \in \partial_{x}f(x^{k},y^{k}) $ and 
$h_{k} \in \partial_{y}(- f(x^{k},y^{k}) )$, we have that 
	\begin{subequations}
		\begin{align}
			&(\forall x \in X) \quad \innp{g_{k}, x -x^{k}}  
			\leq f(x,y^{k}) -f(x^{k},y^{k}) \label{eq:gkineq}; \\
		 &(\forall y \in Y) \quad \innp{h_{k}, y -y^{k}} \leq -f(x^{k},y) + f(x^{k},y^{k})  
		 \label{eq:hkineq}.
		\end{align}
	\end{subequations}

Note that $x =\Pro_{X} x$ and $\Pro_{X}$ is nonexpansive. 
According to \cref{eq:algorithm}, 
\begin{align*}
		\norm{x^{k+1} -x}^{2}  =& \norm{\Pro_{X}(x^{k} - t_{k} g_{k})   - 
		\Pro_{X}x}^{2}\\
		 \leq&  \norm{ x^{k} - t_{k} g_{k}    -x}^{2}\\
		 =&  \norm{ x^{k}  -x}^{2} -2t_{k} \innp{g_{k} , x^{k}  -x } +t_{k}^{2} \norm{ 
		g_{k}}^{2}  \\
		 \stackrel{\cref{eq:gkineq}}{\leq}&  \norm{ x^{k}  -x}^{2} -2t_{k} (f(x^{k},y^{k}) 
		- f(x,y^{k}) )+t_{k}^{2} \norm{ g_{k}}^{2},
\end{align*}
which implies \cref{eq:ineqtk:x}.
	
Similarly, employing  \cref{eq:algorithm} again and $y =\Pro_{Y} y$ in the first 
equality below
and applying   the fact that $\Pro_{Y}$ is nonexpansive in the first inequality 
below, 
we have that 
\begin{align*}
\norm{y^{k+1} -y}^{2} &=& \norm{\Pro_{Y}(y^{k} - t_{k} h_{k})   - \Pro_{Y} y}^{2}\\
&\leq&  \norm{ y^{k} - t_{k} h_{k}    -y}^{2}\\
&=&  \norm{ y^{k}  -y}^{2} -2t_{k} \innp{h_{k} , y^{k}  -y } +t_{k}^{2} \norm{ 
h_{k}}^{2}  \\
&\stackrel{\cref{eq:hkineq}}{\leq}&  \norm{ y^{k}  -y}^{2} -2t_{k} (f(x^{k},y) - 
f(x^{k},y^{k}) )+t_{k}^{2} \norm{ h_{k}}^{2},
\end{align*}
which yields  \cref{eq:ineqtk:y}.
	
\cref{lemma:xkyk:sum}:  
Let $k \in \mathbf{N}$. Because $f(x,\cdot)$ is concave and $f(\cdot, y)$ is convex, 
we observe that
\begin{subequations}
\begin{align} 
&\sum^{k}_{i =0} \frac{t_{i}}{ \sum^{k}_{j=0} t_{j}} f(x, y^{i}) 
\leq f\left( x,  \sum^{k}_{i =0} \frac{t_{i}}{ \sum^{k}_{j=0} t_{j}} y^{i} \right) = f(x, \hat{y}_{k}); \label{eq:fconcaveyhatyk}\\
& \sum^{k}_{i =0} \frac{t_{i}}{ \sum^{k}_{j=0} t_{j}} f(x^{i}, y) 
\geq f\left( \sum^{k}_{i =0} \frac{t_{i}}{ \sum^{k}_{j=0} t_{j}} x^{i} , y \right) 
= f( \hat{x}_{k}, y). \label{eq:fconvexxhatxk}
\end{align}
\end{subequations}

Sum \cref{eq:ineqtk:x} over $i \in \{0,1, \ldots, k\}$ to get that
\[  
\sum^{k}_{i=0} t_{i} (f(x^{i}, y^{i}) - f(x,y^{i})) \leq \tfrac{1}{2} (\norm{x^{0} -x}^{2} 
- \norm{x^{k+1} -x}^{2} ) + \tfrac{1}{2}  \sum^{k}_{i=0} t_{i}^{2} \norm{g_{i}}^{2}.
\]
Divide both sides of the inequality above  by $ \sum^{k}_{j=0} t_{j}$ to see that 
\[ 
 \sum^{k}_{i =0} \frac{t_{i}}{ \sum^{k}_{j=0} t_{j}} f(x^{i}, y^{i}) -   \sum^{k}_{i =0} 
 \frac{t_{i}}{ \sum^{k}_{j=0} t_{j}} f(x, y^{i}) \leq \frac{ \norm{x^{0} -x}^{2} 
 - \norm{x^{k+1} -x}^{2} }{2 \sum^{k}_{j=0} t_{j}}  
 + \frac{ \sum^{k}_{i =0}  t_{i}^{2} \norm{g_{i}}^{2} }{2 \sum^{k}_{j=0} t_{j} }, 
\]
which, combining with \cref{eq:fconcaveyhatyk}, derives \cref{eq:ineqhatxkhatyk:x}.

Similarly, sum \cref{eq:ineqtk:y} over $i \in \{0,1, \ldots, k\}$ to get that
\[ 
\sum^{k}_{i=0} t_{i} (f(x^{i}, y) - f(x^{i},y^{i})) 
\leq \tfrac{1}{2} (\norm{y^{0} -y}^{2} - \norm{y^{k+1} -y}^{2} ) 
+ \tfrac{1}{2} \sum^{k}_{i=0}  t_{i}^{2} \norm{h_{i}}^{2}.
\]
Divide also both sides of the inequality above  by $ \sum^{k}_{j=0} t_{j}$ to get  that 
\[ 
\sum^{k}_{i =0} \frac{t_{i}}{ \sum^{k}_{j=0} t_{j}} f(x^{i}, y) -   
\sum^{k}_{i =0} \frac{t_{i}}{ \sum^{k}_{j=0} t_{j}} f(x^{i},y^{i}) 
\leq \frac{ \norm{y^{0} -y}^{2} - \norm{y^{k+1} -y}^{2} }{2 \sum^{k}_{j=0} t_{j}}  
+ \frac{ \sum^{k}_{i =0}  t_{i}^{2} \norm{h_{i}}^{2} }{2 \sum^{k}_{j=0} t_{j} }, 
\]
which, connecting with \cref{eq:fconvexxhatxk}, establishes  \cref{eq:ineqhatxkhatyk:y}.
\end{proof}

 \begin{lemma} \label{lemma:geqleqineq}
Let $(x^{*}, y^{*}) \in X \times Y$ be a solution of \cref{eq:problem},
and let $k \in \mathbf{N}$. 
The following results hold. 
\begin{enumerate}
\item  \label{lemma:geqleqineq:hatxkyk} 
$ -\frac{ \norm{x^{0} -\hat{x}_{k}}^{2} - \norm{x^{k+1} -\hat{x}_{k}}^{2} }{2 \sum^{k}_{j=0} t_{j}}
  - \frac{ \sum^{k}_{i =0}  t_{i}^{2} \norm{g_{i}}^{2} }{2 \sum^{k}_{j=0} t_{j} }
  \leq f( \hat{x}_{k} ,\hat{y}_{k}) -   
 \sum^{k}_{i =0} \frac{t_{i}}{ \sum^{k}_{j=0} t_{j}} f(x^{i}, y^{i}) 
 \leq  \frac{ \norm{y^{0} -\hat{y}_{k} }^{2} 
 - \norm{y^{k+1} - \hat{y}_{k} }^{2} }{2 \sum^{k}_{j=0} t_{j}}  
 + \frac{ \sum^{k}_{i =0}  t_{i}^{2} \norm{h_{i}}^{2} }{2 \sum^{k}_{j=0} t_{j} }$.
Consequently,
\[ 
-\frac{ \norm{x^{0} -\hat{x}_{k}}^{2}   }{2 \sum^{k}_{j=0} t_{j}}  
- \frac{ \sum^{k}_{i =0}  t_{i}^{2} \norm{g_{i}}^{2} }{2 \sum^{k}_{j=0} t_{j} } 
\leq f( \hat{x}_{k} ,\hat{y}_{k}) -   
\sum^{k}_{i =0} \frac{t_{i}}{ \sum^{k}_{j=0} t_{j}} f(x^{i}, y^{i}) 
\leq  \frac{ \norm{y^{0} -\hat{y}_{k} }^{2}  }{2 \sum^{k}_{j=0} t_{j}}    
+ \frac{ \sum^{k}_{i =0}  t_{i}^{2} \norm{h_{i}}^{2} }{2 \sum^{k}_{j=0} t_{j} }.
\]
\item  \label{lemma:geqleqineq:x*y*} 
$- \frac{ \norm{y^{0} -y^{*}}^{2} - \norm{y^{k+1} -y^{*}}^{2} }{2 \sum^{k}_{j=0} t_{j}}   
 - \frac{ \sum^{k}_{i =0}  t_{i}^{2} \norm{h_{i}}^{2} }{2 \sum^{k}_{j=0} t_{j} } 
 \leq \sum^{k}_{i =0} \frac{t_{i}}{ \sum^{k}_{j=0} t_{j}} f(x^{i}, y^{i}) -  f(x^{*}, y^{*}) 
 \leq \frac{ \norm{x^{0} -x^{*}}^{2} - \norm{x^{k+1} -x^{*}}^{2} }{2 \sum^{k}_{j=0} t_{j}}  
 + \frac{ \sum^{k}_{i =0}  t_{i}^{2} \norm{g_{i}}^{2} }{2 \sum^{k}_{j=0} t_{j} }  $.
Consequently, 
\[  
- \frac{ \norm{y^{0} -y^{*}}^{2}  }{2 \sum^{k}_{j=0} t_{j}}    
- \frac{ \sum^{k}_{i =0}  t_{i}^{2} \norm{h_{i}}^{2} }{2 \sum^{k}_{j=0} t_{j} } 
\leq \sum^{k}_{i =0} \frac{t_{i}}{ \sum^{k}_{j=0} t_{j}} f(x^{i}, y^{i}) 
-  f(x^{*}, y^{*}) \leq \frac{ \norm{x^{0} -x^{*}}^{2} }{2 \sum^{k}_{j=0} t_{j}}  
+ \frac{ \sum^{k}_{i =0}  t_{i}^{2} \norm{g_{i}}^{2} }{2 \sum^{k}_{j=0} t_{j} }.  
\]
 \item  \label{lemma:geqleqineq:sumabove} 
 $ -\frac{ \norm{x^{0} -\hat{x}_{k}}^{2} + \norm{y^{0} -y^{*}}^{2} }{2 \sum^{k}_{j=0} t_{j}} 
  - \frac{ \sum^{k}_{i =0}  t_{i}^{2} (\norm{g_{i}}^{2} 
  + \norm{h_{i}}^{2}  )}{2 \sum^{k}_{j=0} t_{j} } \leq f( \hat{x}_{k} ,\hat{y}_{k}) 
  -  f(x^{*}, y^{*})  \leq  \frac{ \norm{x^{0} -x^{*}}^{2} 
  + \norm{y^{0} -\hat{y}_{k} }^{2}   }{2 \sum^{k}_{j=0} t_{j}}    
  + \frac{ \sum^{k}_{i =0}  t_{i}^{2}(  \norm{g_{i}}^{2} 
  + \norm{h_{i}}^{2}) }{2 \sum^{k}_{j=0} t_{j} }$.
 	\end{enumerate} 
 \end{lemma}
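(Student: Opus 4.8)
The plan is to derive all three parts directly from \cref{lemma:xkyk}\cref{lemma:xkyk:sum}, whose inequalities \cref{eq:ineqhatxkhatyk:x} and \cref{eq:ineqhatxkhatyk:y} are valid for \emph{every} $x \in X$ and $y \in Y$, by making judicious substitutions of the running averages $\hat{x}_{k}, \hat{y}_{k}$ and of the saddle-point coordinates $x^{*}, y^{*}$, and then invoking the saddle-point inequalities \cref{eq:solution}. For \cref{lemma:geqleqineq:hatxkyk} I would specialize those two inequalities to the averages themselves. Setting $y = \hat{y}_{k}$ in \cref{eq:ineqhatxkhatyk:y} produces the upper bound on $f(\hat{x}_{k},\hat{y}_{k}) - \sum^{k}_{i=0} \frac{t_{i}}{\sum^{k}_{j=0} t_{j}} f(x^{i},y^{i})$ immediately, while setting $x = \hat{x}_{k}$ in \cref{eq:ineqhatxkhatyk:x} and multiplying through by $-1$ gives the matching lower bound. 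The consequent (second) inequality then follows by discarding the terms $-\norm{x^{k+1}-\hat{x}_{k}}^{2} \leq 0$ and $-\norm{y^{k+1}-\hat{y}_{k}}^{2} \leq 0$; because of their signs, dropping them enlarges the upper bound and lowers the lower bound, so both directions are preserved.

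For \cref{lemma:geqleqineq:x*y*} I would instead substitute the saddle-point coordinates and then apply \cref{eq:solution}. Putting $x = x^{*}$ in \cref{eq:ineqhatxkhatyk:x} and using the left inequality of \cref{eq:solution} with $y = \hat{y}_{k}$, namely $f(x^{*},\hat{y}_{k}) \leq f(x^{*},y^{*})$, replaces $-f(x^{*},\hat{y}_{k})$ by the larger quantity $-f(x^{*},y^{*})$ and yields the upper bound on $\sum^{k}_{i=0} \frac{t_{i}}{\sum^{k}_{j=0} t_{j}} f(x^{i},y^{i}) - f(x^{*},y^{*})$. Symmetrically, putting $y = y^{*}$ in \cref{eq:ineqhatxkhatyk:y}, using the right inequality of \cref{eq:solution} with $x = \hat{x}_{k}$, namely $f(x^{*},y^{*}) \leq f(\hat{x}_{k},y^{*})$, and then negating produces the lower bound. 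Discarding the nonpositive $-\norm{x^{k+1}-x^{*}}^{2}$ and $-\norm{y^{k+1}-y^{*}}^{2}$ terms gives the consequent statement exactly as in the previous part.

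Finally, \cref{lemma:geqleqineq:sumabove} is obtained by telescoping the two preceding parts through the trivial decomposition
\[
f(\hat{x}_{k},\hat{y}_{k}) - f(x^{*},y^{*}) = \left( f(\hat{x}_{k},\hat{y}_{k}) - \sum^{k}_{i=0} \tfrac{t_{i}}{\sum^{k}_{j=0} t_{j}} f(x^{i},y^{i}) \right) + \left( \sum^{k}_{i=0} \tfrac{t_{i}}{\sum^{k}_{j=0} t_{j}} f(x^{i},y^{i}) - f(x^{*},y^{*}) \right),
\]
and adding the simplified (consequent) bounds of \cref{lemma:geqleqineq:hatxkyk} and \cref{lemma:geqleqineq:x*y*} termwise. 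The $\norm{g_{i}}^{2}$ and $\norm{h_{i}}^{2}$ contributions then coalesce into the single factor $\norm{g_{i}}^{2} + \norm{h_{i}}^{2}$, and the distance terms combine into $\norm{x^{0}-\hat{x}_{k}}^{2} + \norm{y^{0}-y^{*}}^{2}$ for the lower bound and $\norm{x^{0}-x^{*}}^{2} + \norm{y^{0}-\hat{y}_{k}}^{2}$ for the upper bound, yielding the claimed two-sided estimate.

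I expect no genuine obstacle here: the entire argument is bookkeeping on top of \cref{lemma:xkyk}. The one point demanding care is sign and direction tracking, i.e. applying each half of the saddle-point inequality \cref{eq:solution} to the correct variable ($\hat{y}_{k}$ against $x^{*}$ versus $\hat{x}_{k}$ against $y^{*}$) and in the direction that \emph{preserves} rather than reverses each chained inequality, and remembering that every lower bound arises by negating the \enquote{reversed} inequality from \cref{lemma:xkyk:sum} rather than its natural counterpart.
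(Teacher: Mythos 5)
Your proposal is correct and follows essentially the same route as the paper's own proof: the same substitutions $x=\hat{x}_{k}$, $y=\hat{y}_{k}$ for \cref{lemma:geqleqineq:hatxkyk}, the same use of $x=x^{*}$, $y=y^{*}$ together with the two halves of \cref{eq:solution} for \cref{lemma:geqleqineq:x*y*}, and termwise addition for \cref{lemma:geqleqineq:sumabove}. The only detail worth making explicit (which the paper does) is that $\hat{x}_{k} \in X$ and $\hat{y}_{k} \in Y$ by convexity of $X$ and $Y$, which licenses both the substitutions and the application of \cref{eq:solution} at the averaged points.
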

 
 \begin{proof}
 	\cref{lemma:geqleqineq:hatxkyk}: 
	Substitute $x$ in \cref{eq:ineqhatxkhatyk:x}  of \cref{lemma:xkyk} 
	with  $\hat{x}_{k}$ to observe that 
\[ 
  \sum^{k}_{i =0} \frac{t_{i}}{ \sum^{k}_{j=0} t_{j}} f(x^{i}, y^{i}) 
  -  f(\hat{x}_{k},\hat{y}_{k} ) \leq \frac{ \norm{x^{0} -\hat{x}_{k}}^{2} 
  - \norm{x^{k+1} -\hat{x}_{k}}^{2} }{2 \sum^{k}_{j=0} t_{j}}  
  + \frac{ \sum^{k}_{i =0}  t_{i}^{2} \norm{g_{i}}^{2} }{2 \sum^{k}_{j=0} t_{j} }. 
\]
Replace $y$ in \cref{eq:ineqhatxkhatyk:y} of \cref{lemma:xkyk} by $\hat{y}_{k}$ to get that 
\[ 	
 f( \hat{x}_{k} ,\hat{y}_{k}) -   \sum^{k}_{i =0} \frac{t_{i}}{ \sum^{k}_{j=0} t_{j}} f(x^{i}, y^{i}) 
 \leq  \frac{ \norm{y^{0} -\hat{y}_{k}}^{2} - 
 \norm{y^{k+1} -\hat{y}_{k}}^{2} }{2 \sum^{k}_{j=0} t_{j}}    
 + \frac{ \sum^{k}_{i =0}  t_{i}^{2} \norm{h_{i}}^{2} }{2 \sum^{k}_{j=0} t_{j} }.
\]
 Combine the last two inequalities to deduce the desired inequality 
 in \cref{lemma:geqleqineq:hatxkyk}. 
 
\cref{lemma:geqleqineq:x*y*}: 
Note that $(x^{k})_{k \in \mathbf{N}}$ and  $(y^{k})_{k \in \mathbf{N}}$ 
are   sequences in $X$ and $Y$, respectively,  and   
that $X$ and $Y$ are convex. 
We know that   $ \hat{x}_{k} \in X $  and $\hat{y}_{k} \in Y$. 
Recall that $(x^{*}, y^{*}) \in X \times Y$ is a solution of \cref{eq:problem}. 
Employing \cref{eq:solution}, we obtain that 
\begin{subequations}
\begin{align}
& f(x^{*}, \hat{y}_{k})  \leq  f(x^{*}, y^{*}); \label{eq:x*hatyk}\\
& f(x^{*}, y^{*})   \leq f(\hat{x}_{k} , y^{*}). \label{eq:hatxky*}
\end{align}
\end{subequations}

Set $x = x^{*}$ in \cref{eq:ineqhatxkhatyk:x} to get that 
\[ 
\sum^{k}_{i =0} \frac{t_{i}}{ \sum^{k}_{j=0} t_{j}} f(x^{i}, y^{i}) 
-  f(x^{*},\hat{y}_{k} ) \stackrel{\cref{eq:ineqhatxkhatyk:x}}{  \leq} \frac{ \norm{x^{0} -x^{*}}^{2} 
- \norm{x^{k+1} -x^{*}}^{2} }{2 \sum^{k}_{j=0} t_{j}}  
+ \frac{ \sum^{k}_{i =0}  t_{i}^{2} \norm{g_{i}}^{2} }{2 \sum^{k}_{j=0} t_{j} },  
\]
 which, connecting with  \cref{eq:x*hatyk},  derives 
\begin{align} \label{eq:lemma:geqleqineq:x*}
\sum^{k}_{i =0} \frac{t_{i}}{ \sum^{k}_{j=0} t_{j}} f(x^{i}, y^{i}) 
-   f(x^{*}, y^{*})   \leq  \frac{ \norm{x^{0} -x^{*}}^{2} 
- \norm{x^{k+1} -x^{*}}^{2} }{2 \sum^{k}_{j=0} t_{j}}  
+ \frac{ \sum^{k}_{i =0}  t_{i}^{2} \norm{g_{i}}^{2} }{2 \sum^{k}_{j=0} t_{j} },  
\end{align}
 
 Similarly, setting $y = y^{*}$ in \cref{eq:ineqhatxkhatyk:y}, we have that 
\[ 
f( \hat{x}_{k} ,y^{*}) -   \sum^{k}_{i =0} \frac{t_{i}}{ \sum^{k}_{j=0} t_{j}} f(x^{i}, y^{i}) \stackrel{\cref{eq:ineqhatxkhatyk:y} }{\leq}   
\frac{ \norm{y^{0} -y^{*}}^{2} - \norm{y^{k+1} -y^{*}}^{2} }{2 \sum^{k}_{j=0} t_{j}}    
+ \frac{ \sum^{k}_{i =0}  t_{i}^{2} \norm{h_{i}}^{2} }{2 \sum^{k}_{j=0} t_{j} },
\]
which, combining with \cref{eq:hatxky*}, entails
 \begin{align}\label{eq:lemma:geqleqineq:y*}
f( x^{*} ,y^{*}) -   \sum^{k}_{i =0} \frac{t_{i}}{ \sum^{k}_{j=0} t_{j}} f(x^{i}, y^{i}) 
\leq  \frac{ \norm{y^{0} -y^{*}}^{2} - \norm{y^{k+1} -y^{*}}^{2} }{2 \sum^{k}_{j=0} t_{j}}    
+ \frac{ \sum^{k}_{i =0}  t_{i}^{2} \norm{h_{i}}^{2} }{2 \sum^{k}_{j=0} t_{j} }.
\end{align}
 
 Combine \cref{eq:lemma:geqleqineq:x*} and \cref{eq:lemma:geqleqineq:y*} 
 to reach the required inequality in \cref{lemma:geqleqineq:x*y*}.
 
\cref{lemma:geqleqineq:sumabove}: This follows immediately from 
\cref{lemma:geqleqineq:hatxkyk}  and \cref{lemma:geqleqineq:x*y*} above.
 \end{proof}
 
 The following result is inspired by \cite[Proposition~3.1]{NedicOzdaglar2009} 
 in which, under some extra  boundedness or compactness assumptions,  
 the authors considered the convergence 
 $\frac{1}{ k+1}	\sum^{k}_{i =0}  f(x^{i}, y^{i}) \to f(x^{*}, y^{*}) $  
 and $f(\tfrac{1}{k}\sum^{k-1}_{i=0} x^{i}, \tfrac{1}{k}\sum^{k-1}_{i=0} y^{i}) \to f(x^{*},y^{*}) $, 
 where $(x^{*},y^{*}) $ is a saddle-point of $f$ and  
 $\left((x^{k}, y^{k})\right)_{k \in \mathbf{N}}$   is generated 
 by \cref{eq:algorithm} with $(\forall k \in \mathbf{N})$ $t_{k} \equiv \alpha \in 
 \mathbf{R}_{++}$. 
  
  \begin{theorem} \label{theorem:sumconverge}
  Let $(x^{*}, y^{*}) \in X \times Y$ be a solution of \cref{eq:problem}.		
  Let $R$, $G$, and  $S$ be in $\mathbf{R}_{++}$. Suppose that 
\begin{align} \label{prop:sumconverge:RG}
  \norm{(x^{0},y^{0})} \leq R, \quad \norm{(x^{*},y^{*})} \leq R, 
   \quad \text{and}  \quad (\forall k \in \mathbf{N}) \norm{(g_{k}, h_{k})} \leq G, 
\end{align}
 and that the step sizes satisfy that
\begin{align} \label{prop:sumconverge:tiS}
	(\forall i \in \mathbf{N})~ t_{i} \geq 0 \text{ with } t_{0} > 0, \quad  
	\sum^{\infty}_{j=0} t_{j} =\infty, \quad \text{and} \quad   \sum^{\infty}_{j=0} 
	t_{j}^{2} 
	=S < \infty. 
\end{align}

Then  $\sum^{k}_{i =0} \frac{t_{i}}{ \sum^{k}_{j=0} t_{j}} f(x^{i}, y^{i}) $  
converges to  $ f(x^{*}, y^{*}) $.
  \end{theorem}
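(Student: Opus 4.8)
The plan is to extract the convergence directly from the two-sided estimate already proved and then apply a squeeze argument; essentially all the work has been front-loaded into the auxiliary lemmas. Abbreviate $S_{k} := \sum_{j=0}^{k} t_{j}$. The ``Consequently'' inequality in \cref{lemma:geqleqineq:x*y*} asserts
\[
- \frac{\norm{y^{0}-y^{*}}^{2}}{2 S_{k}} - \frac{\sum_{i=0}^{k} t_{i}^{2}\norm{h_{i}}^{2}}{2 S_{k}}
\leq \sum_{i=0}^{k} \frac{t_{i}}{S_{k}} f(x^{i},y^{i}) - f(x^{*},y^{*})
\leq \frac{\norm{x^{0}-x^{*}}^{2}}{2 S_{k}} + \frac{\sum_{i=0}^{k} t_{i}^{2}\norm{g_{i}}^{2}}{2 S_{k}},
\]
so it suffices to show that both the lower and the upper bound tend to $0$ as $k \to \infty$.

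To handle the numerators I would first control the initial-distance terms. Since $\norm{x^{0}} \leq \norm{(x^{0},y^{0})} \leq R$ and $\norm{x^{*}} \leq \norm{(x^{*},y^{*})} \leq R$ by \cref{prop:sumconverge:RG}, the triangle inequality gives $\norm{x^{0}-x^{*}}^{2} \leq 4R^{2}$, and identically $\norm{y^{0}-y^{*}}^{2} \leq 4R^{2}$; both are constants independent of $k$. For the subgradient terms, the uniform bound $\norm{g_{i}} \leq \norm{(g_{i},h_{i})} \leq G$ together with $\sum_{i=0}^{k} t_{i}^{2} \leq \sum_{j=0}^{\infty} t_{j}^{2} = S$ from \cref{prop:sumconverge:tiS} yields $\sum_{i=0}^{k} t_{i}^{2}\norm{g_{i}}^{2} \leq G^{2} S$, and likewise $\sum_{i=0}^{k} t_{i}^{2}\norm{h_{i}}^{2} \leq G^{2} S$. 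Hence each of the four numerators appearing in the displayed bounds is dominated by a fixed constant that does not depend on $k$.

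Finally, because $\sum_{j=0}^{\infty} t_{j} = \infty$ we have $S_{k} \to \infty$, so every one of the four fractions is of the form (bounded constant)$/(2 S_{k}) \to 0$. Thus the lower and upper bounds both converge to $0$, and the squeeze theorem forces $\sum_{i=0}^{k} \frac{t_{i}}{S_{k}} f(x^{i},y^{i}) - f(x^{*},y^{*}) \to 0$, which is exactly the claimed convergence. There is no serious obstacle once \cref{lemma:geqleqineq} is in hand; the only point deserving care — and the reason the square-summability hypothesis is imposed — is that the sum $\sum_{i=0}^{k} t_{i}^{2}\norm{g_{i}}^{2}$ has a growing number of terms, yet square-summability keeps this numerator uniformly bounded while non-summability of $(t_{j})_{j\in\mathbf{N}}$ sends the denominator to infinity, so no boundedness of the iterates or compactness of the feasible sets is needed at this stage.
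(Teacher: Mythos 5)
Your proposal is correct and is essentially identical to the paper's own proof: both invoke the ``Consequently'' estimate of \cref{lemma:geqleqineq}\cref{lemma:geqleqineq:x*y*}, bound the numerators by $4R^{2}$ (via the triangle inequality from \cref{prop:sumconverge:RG}) and by $G^{2}S$ (via square-summability in \cref{prop:sumconverge:tiS}), and conclude by letting $\sum_{j=0}^{k} t_{j} \to \infty$ squeeze the difference to zero. No gaps; your closing remark correctly identifies the division of labor between the two step-size hypotheses.
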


\begin{proof}
According to \cref{lemma:geqleqineq}\cref{lemma:geqleqineq:x*y*}, we have that 
\[ 
- \frac{ \norm{y^{0} -y^{*}}^{2}  }{2 \sum^{k}_{j=0} t_{j}}    
- \frac{ \sum^{k}_{i =0}  t_{i}^{2} \norm{h_{i}}^{2} }{2 \sum^{k}_{j=0} t_{j} } 
\leq \sum^{k}_{i =0} \frac{t_{i}}{ \sum^{k}_{j=0} t_{j}} f(x^{i}, y^{i}) -  f(x^{*}, y^{*}) \leq \frac{ \norm{x^{0} -x^{*}}^{2} }{2 \sum^{k}_{j=0} t_{j}}  + \frac{ \sum^{k}_{i =0}  t_{i}^{2} \norm{g_{i}}^{2} }{2 \sum^{k}_{j=0} t_{j} },
\]
which, combined with our assumptions,
\cref{prop:sumconverge:RG} and \cref{prop:sumconverge:tiS}, 
guarantees that
\[ 
- \frac{ 4R^{2}  }{2 \sum^{k}_{j=0} t_{j}}    
- \frac{ G^{2}S}{2 \sum^{k}_{j=0} t_{j} } 
\leq \sum^{k}_{i =0} \frac{t_{i}}{ \sum^{k}_{j=0} t_{j}} f(x^{i}, y^{i}) 
-  f(x^{*}, y^{*}) \leq \frac{ 4R^{2} }{2 \sum^{k}_{j=0} t_{j}}  
+ \frac{ G^{2}S}{2 \sum^{k}_{j=0} t_{j} },
\]
The inequalities above ensure  
$ \sum^{k}_{i =0} \frac{t_{i}}{ \sum^{k}_{j=0} t_{j}} f(x^{i}, y^{i}) -  f(x^{*}, y^{*}) \to 0$ 
since  $ \sum^{\infty}_{i=0} t_{i} =\infty$.
\end{proof}

Below we shall prove the boundedness of the sequence 
$((x^{k},y^{k}))_{k \in \mathbf{N}}$, 
which plays a critical role in our proof of 
\cref{theorem:xhatkyhatkConverge} below.

\begin{lemma} \label{lemma:basicsubgradients}
Let $(x^{*}, y^{*}) \in X \times Y$ be a solution of \cref{eq:problem}.		
Then 
\[  
	(\forall k \in \mathbf{N}) \quad \innp{(x^{k},y^{k})-(x^{*},y^{*}), (g_{k},h_{k}) } 
	\geq 
	0. 
\] 
\end{lemma}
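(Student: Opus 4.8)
The plan is to expand the inner product over the Hilbert direct sum according to \cref{eq:innerproduct}, writing
\[
\innp{(x^{k},y^{k})-(x^{*},y^{*}), (g_{k},h_{k})} = \innp{g_{k}, x^{k}-x^{*}} + \innp{h_{k}, y^{k}-y^{*}},
\]
and then to bound each of the two summands from below using the defining subgradient inequalities for $g_{k} \in \partial_{x}f(x^{k},y^{k})$ and $h_{k} \in \partial_{y}(-f(x^{k},y^{k}))$. These are precisely the inequalities \cref{eq:gkineq,eq:hkineq} already recorded in the proof of \cref{lemma:xkyk}, so no new subgradient computation is needed.

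First I would set $x = x^{*}$ in \cref{eq:gkineq} and $y = y^{*}$ in \cref{eq:hkineq}. After rearranging signs, this yields
\[
\innp{g_{k}, x^{k}-x^{*}} \geq f(x^{k},y^{k}) - f(x^{*},y^{k}) \quad\text{and}\quad \innp{h_{k}, y^{k}-y^{*}} \geq f(x^{k},y^{*}) - f(x^{k},y^{k}).
\]
Adding these two inequalities, the ``diagonal'' terms $\pm f(x^{k},y^{k})$ cancel, leaving
\[
\innp{(x^{k},y^{k})-(x^{*},y^{*}), (g_{k},h_{k})} \geq f(x^{k},y^{*}) - f(x^{*},y^{k}).
\]

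It then remains to verify that the right-hand side is nonnegative, which is exactly where the saddle-point hypothesis enters. Applying the defining inequalities \cref{eq:solution} with the substitutions $x = x^{k}$ and $y = y^{k}$ produces the chain $f(x^{*},y^{k}) \leq f(x^{*},y^{*}) \leq f(x^{k},y^{*})$, so that $f(x^{k},y^{*}) - f(x^{*},y^{k}) \geq 0$, which completes the argument.

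I do not expect a genuine obstacle here; the proof is a short composition of the subgradient inequalities with the saddle-point definition. The only point demanding care is the bookkeeping of signs: one must remember that $h_{k}$ is a subgradient of $-f(x^{k},\cdot)$ rather than of $f(x^{k},\cdot)$, so \cref{eq:hkineq} carries the opposite orientation, and one must notice that the diagonal values $f(x^{k},y^{k})$ cancel upon addition so that the lower bound collapses to the sign-definite quantity $f(x^{k},y^{*}) - f(x^{*},y^{k})$ controlled by \cref{eq:solution}.
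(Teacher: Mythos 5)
Your proof is correct, and it takes a genuinely different route from the paper's. The paper argues structurally: it cites Rockafellar's theorem (\cite[Theorem~1]{Rockafellar1970}) that the operator $T:(\bar{x},\bar{y})\mapsto(u,v)$ with $u \in \partial_{x}f(\bar{x},\bar{y})$ and $v \in \partial_{y}(-f(\bar{x},\bar{y}))$ is monotone, uses \cref{fact:saddlepoint} to see that $(0,0)$ is an admissible value of $T(x^{*},y^{*})$ at the saddle point, and then reads off $\innp{(x^{k},y^{k})-(x^{*},y^{*}), (g_{k},h_{k})} = \innp{(x^{k},y^{k})-(x^{*},y^{*}), T(x^{k},y^{k})-T(x^{*},y^{*})} \geq 0$ directly from monotonicity. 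You instead verify the inequality by hand from \cref{eq:gkineq,eq:hkineq} with $x=x^{*}$, $y=y^{*}$, cancel the diagonal terms $f(x^{k},y^{k})$, and close with \cref{eq:solution} evaluated at $(x^{k},y^{k})$; your sign bookkeeping is right throughout. In effect you have inlined the proof of monotonicity of $T$ for the particular pair $((x^{k},y^{k}),(x^{*},y^{*}))$ --- this computation is exactly how Rockafellar's result is established in the convex-concave setting --- so the two arguments are the same mathematics packaged differently. What your version buys: it is fully self-contained (no external citation, and \cref{fact:saddlepoint} is not needed), and it delivers the slightly sharper intermediate estimate $\innp{(x^{k},y^{k})-(x^{*},y^{*}), (g_{k},h_{k})} \geq f(x^{k},y^{*}) - f(x^{*},y^{k}) \geq 0$, a lower bound by the nonnegative saddle gap rather than bare nonnegativity. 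What the paper's version buys: brevity and a structural viewpoint (the saddle operator is monotone) that situates the scheme \cref{eq:algorithm} within methods for monotone inclusions. One point worth stating explicitly if you write this up: the substitutions require $(x^{k},y^{k}) \in X \times Y$, which holds because the iterates are produced by the projections $\Pro_{X}$ and $\Pro_{Y}$ in \cref{eq:algorithm} (and $(x^{0},y^{0}) \in X \times Y$ by assumption).
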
	

\begin{proof}
As a result of 	\cite[Theorem~1]{Rockafellar1970}, the operator
\[ 
T : X \times Y \to \mathcal{H}_{1}  \times \mathcal{H}_{2}: (\bar{x},\bar{y}) \mapsto (u,v), 
\quad \text{where } u \in \partial_{x}f (\bar{x},\bar{y}) 
\text{ and } v \in  \partial_{y}( -f (\bar{x},\bar{y}))
\]
is monotone. 

In view of our assumption and \cref{fact:saddlepoint}, 
\begin{align} \label{eq:lemma:basicsubgradients}
0 \in \partial_{x} f(x^{*},y^{*})  \quad and \quad 0 \in \partial_{y}(-f(x^{*},y^{*})).
\end{align}
Recall from the construction of the algorithm \cref{eq:algorithm}  
that $ (\forall k \in \mathbf{N})$  $g_{k} \in \partial_{x}f(x^{k},y^{k}) $ 
and $h_{k} \in \partial_{y}(- f(x^{k},y^{k}) )$.
Combine this with \cref{eq:lemma:basicsubgradients}  to derive that
\[ 
(\forall k \in \mathbf{N}) \quad  
\innp{(x^{k},y^{k})-(x^{*},y^{*}), (g_{k},h_{k}) } 
=   \innp{(x^{k},y^{k})-(x^{*},y^{*}), T(x^{k},y^{k})-T(x^{*},y^{*})  } \geq 0,
\]
where we use the monotonicity of $T$ in the last inequality.  
\end{proof}

\begin{lemma} \label{lemma:xkykbounded}
Let $(x^{*}, y^{*}) \in X \times Y$ be a solution of \cref{eq:problem}.	 
Let $R$, $G$, and  $S$ be in $\mathbf{R}_{++}$. Suppose that 
\[ 
\norm{(x^{0},y^{0})} \leq R, \quad \norm{(x^{*},y^{*})} \leq R,  
\quad \text{and}  \quad  (\forall k \in \mathbf{N})  \norm{(g_{k}, h_{k})} \leq G, 
\]
and that the step sizes satisfy that
\[ 
(\forall i \in \mathbf{N}) ~ t_{i} \geq 0 \text{ with } t_{0} > 0   \quad \text{and} 
\quad   
\sum^{\infty}_{j=0} t_{j}^{2} =S < \infty. 
\]
Then $((x^{k}, y^{k}))_{k \in \mathbf{N}}$ is bounded.
\end{lemma}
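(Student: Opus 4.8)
The plan is to show that the sequence $((x^{k},y^{k}))_{k \in \mathbf{N}}$ is \emph{Fej\'er monotone} with respect to the saddle-point $(x^{*},y^{*})$ up to a square-summable perturbation, and then to telescope. The crucial ingredient will be \cref{lemma:basicsubgradients}, which supplies the sign condition $\innp{(x^{k},y^{k})-(x^{*},y^{*}), (g_{k},h_{k})} \geq 0$.

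First I would merge the two updates in \cref{eq:algorithm} into a single update in the product space $\mathcal{H}_{1} \times \mathcal{H}_{2}$: writing $\Pro_{X \times Y} = (\Pro_{X}, \Pro_{Y})$, the scheme reads $(x^{k+1},y^{k+1}) = \Pro_{X \times Y}\big((x^{k},y^{k}) - t_{k} (g_{k}, h_{k})\big)$. Since $X \times Y$ is closed and convex, $\Pro_{X \times Y}$ is nonexpansive, and $(x^{*},y^{*}) = \Pro_{X \times Y}(x^{*},y^{*})$ because $(x^{*},y^{*}) \in X \times Y$. Expanding the squared distance to the saddle-point, using nonexpansiveness in the first inequality and the inner product \cref{eq:innerproduct} in the expansion, I get
\begin{align*}
\norm{(x^{k+1},y^{k+1}) - (x^{*},y^{*})}^{2}
&\leq \norm{(x^{k},y^{k}) - t_{k}(g_{k},h_{k}) - (x^{*},y^{*})}^{2}\\
&= \norm{(x^{k},y^{k})-(x^{*},y^{*})}^{2} - 2t_{k} \innp{(g_{k},h_{k}), (x^{k},y^{k})-(x^{*},y^{*})} + t_{k}^{2} \norm{(g_{k},h_{k})}^{2}.
\end{align*}
By \cref{lemma:basicsubgradients} together with $t_{k} \geq 0$, the cross term is nonpositive, and by $\norm{(g_{k},h_{k})} \leq G$ the last term is at most $t_{k}^{2} G^{2}$, so that $\norm{(x^{k+1},y^{k+1}) - (x^{*},y^{*})}^{2} \leq \norm{(x^{k},y^{k})-(x^{*},y^{*})}^{2} + t_{k}^{2} G^{2}$.

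Finally I would telescope this recursion from $0$ to $k$ and invoke $\sum^{\infty}_{j=0} t_{j}^{2} = S$ along with the triangle inequality $\norm{(x^{0},y^{0})-(x^{*},y^{*})} \leq \norm{(x^{0},y^{0})} + \norm{(x^{*},y^{*})} \leq 2R$ to obtain
\[
\norm{(x^{k+1},y^{k+1}) - (x^{*},y^{*})}^{2} \leq \norm{(x^{0},y^{0})-(x^{*},y^{*})}^{2} + G^{2} \sum^{k}_{i=0} t_{i}^{2} \leq 4R^{2} + G^{2} S,
\]
a constant independent of $k$. One further application of the triangle inequality yields $\norm{(x^{k},y^{k})} \leq R + \sqrt{4R^{2} + G^{2} S}$ for every $k$, which is the claimed boundedness.

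I do not anticipate a genuine obstacle: the whole argument hinges on \cref{lemma:basicsubgradients}, and the square-summability of $(t_{k})_{k \in \mathbf{N}}$ is exactly what keeps the accumulated error $G^{2}\sum_{i} t_{i}^{2}$ finite. The closest thing to a subtlety is simply recognizing that the two separate projected updates can be treated as one nonexpansive projection on the product space, after which the standard subgradient-method distance estimate applies verbatim. Notably, the condition $\sum^{\infty}_{j=0} t_{j} = \infty$ is \emph{not} required for this lemma, consistent with its hypotheses.
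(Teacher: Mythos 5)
Your proposal is correct and follows essentially the same route as the paper: the standard one-step distance estimate via nonexpansiveness of the projection, the sign condition from \cref{lemma:basicsubgradients} to discard the cross term, and square-summability of the step sizes to arrive at the uniform bound $4R^{2}+SG^{2}$. The only cosmetic differences are that you drop the inner-product term at each iteration before telescoping (the paper sums first, then invokes \cref{lemma:basicsubgradients} on the whole sum) and that you phrase the two projections as a single product-space projection $\Pro_{X\times Y}=(\Pro_{X},\Pro_{Y})$, where the paper splits the norm componentwise via \cref{eq:norm}; both are equivalent.
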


\begin{proof}
Let $k \in \mathbf{N}$.	
Applying \cref{eq:algorithm} in the first equation, we have that 
\begin{align*}
&\norm{ (x^{k+1}, y^{k+1})  - (x^{*}, y^{*})}^{2}\\
= & \norm{ (\Pro_{X}(x^{k} - t_{k} g_{k}), \Pro_{Y}(y^{k} - t_{k} h_{k}))  
- (x^{*}, y^{*}) }^{2}\\
= & \norm{\Pro_{X}(x^{k} - t_{k} g_{k}) - x^{*}}^{2} 
+ \norm{\Pro_{Y}(y^{k} - t_{k} h_{k}) -  y^{*}}^{2}\\
= &\norm{\Pro_{X}(x^{k} - t_{k} g_{k}) - \Pro_{X}x^{*}}^{2} 
+ \norm{\Pro_{Y}(y^{k} - t_{k} h_{k}) -  \Pro_{Y}y^{*}}^{2}\\
\leq & \norm{ x^{k} - t_{k} g_{k} -x^{*} }^{2} 
+  \norm{ y^{k} - t_{k} h_{k} - y^{*}}^{2}\\
=  & \norm{x^{k} -x^{*} }^{2} - 2 t_{k} \innp{x^{k}-x^{*}, g_{k}} 
+ t_{k}^{2}\norm{g_{k}}^{2} + \norm{y^{k} -y^{*} }^{2} 
-2 t_{k} \innp{y^{k}-y^{*}, h_{k}} + t_{k}^{2} \norm{h_{k}}^{2}\\
= &\norm{(x^{k},y^{k})- (x^{*},y^{*}) }^{2} 
-2t_{k} \innp{(x^{k},y^{k})-(x^{*},y^{*}), (g_{k},h_{k}) } + t_{k}^{2} \norm{(g_{k},h_{k})}^{2}.
\end{align*}
Note that we use \cref{eq:norm} in the second equation,  
that we use the fact $x^{*} \in X$, $y^{*} \in Y$, 
$x^{*} =\Pro_{X} x^{*}$, and $y^{*} =\Pro_{Y} y^{*}$ in the third equation, 
that the nonexpansiveness of $\Pro_{X}$ and $\Pro_{Y}$ 
is used in the inequality above, and that we use both \cref{eq:norm} 
and  \cref{eq:innerproduct} in the last equation. 
	
The result above actually tells us that for every $ i \in \{0,1,\ldots,k\}$,
\[ 
\norm{ (x^{i+1}, y^{i+1})  - (x^{*}, y^{*})}^{2} -\norm{(x^{i},y^{i})- (x^{*},y^{*}) }^{2} 
+ 2t_{i} \innp{(x^{i},y^{i})-(x^{*},y^{*}), (g_{i},h_{i}) }  
\leq t_{i}^{2} \norm{(g_{i},h_{i})}^{2}.
\]
Sum the inequality above over   $ i \in \{0,1,\ldots,k\}$ to obtain that
\begin{align*}
	&\norm{ (x^{k+1}, y^{k+1})  - (x^{*}, y^{*})}^{2} -\norm{(x^{0},y^{0})- 
	(x^{*},y^{*}) 
	}^{2} + 2 \sum^{k}_{i=0} t_{i} \innp{(x^{i},y^{i})-(x^{*},y^{*}), (g_{i},h_{i}) }\\
	\leq &   \sum^{k}_{i=0}t_{i}^{2} \norm{(g_{i},h_{i})}^{2},
\end{align*}
which, combined with the assumptions,  implies that 
\begin{align}\label{eq:lemma:xkykbounded:leq}
\norm{ (x^{k+1}, y^{k+1})  - (x^{*}, y^{*})}^{2}
+ 2 \sum^{k}_{i=0} t_{i} \innp{(x^{i},y^{i})-(x^{*},y^{*}), (g_{i},h_{i}) } 
 \leq 4R^{2} + SG^{2}.
\end{align}

 In view of \cref{lemma:basicsubgradients}, 
 $\sum^{k}_{i=0} t_{i} \innp{(x^{i},y^{i})-(x^{*},y^{*}), (g_{i},h_{i}) } \geq 0$. 
 Hence,  by \cref{eq:lemma:xkykbounded:leq},
\[ 
\norm{ (x^{k+1}, y^{k+1})  - (x^{*}, y^{*})}^{2}   \leq 4R^{2} + SG^{2},
\]
which yields the
boundedness of $((x^{k}, y^{k}))_{k \in \mathbf{N}}$.
\end{proof}

\begin{lemma} \label{lemma:xhatkyhatkbounded}
Let $(x^{*}, y^{*}) \in X \times Y$ be a solution of \cref{eq:problem}.		
Let $R$, $G$, and  $S$ be in $\mathbf{R}_{++}$. Suppose that 
\[ 
\norm{(x^{0},y^{0})} \leq R, \quad \norm{(x^{*},y^{*})} \leq R,  
\quad \text{and}  \quad (\forall k \in \mathbf{N})  \norm{(g_{k}, h_{k})} \leq G, 
\]  
and that the step sizes satisfy that
\[  
(\forall i \in \mathbf{N}) ~t_{i} \geq 0 \text{ with } t_{0} > 0   \quad \text{and} 
\quad   \sum^{\infty}_{j=0} t_{j}^{2} =S < \infty. 
\]

Then $((\hat{x}_{k}, \hat{y}_{k}))_{k \in \mathbf{N}}$ is bounded.
\end{lemma}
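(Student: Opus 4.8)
The plan is to exploit the fact, already essentially in hand, that each $\hat{x}_{k}$ and $\hat{y}_{k}$ is a \emph{convex combination} of the iterates. Writing $\lambda_{i}^{(k)} = \frac{t_{i}}{\sum^{k}_{j=0} t_{j}}$, the weights satisfy $\lambda_{i}^{(k)} \geq 0$ (since $t_{i} \geq 0$) and $\sum^{k}_{i=0} \lambda_{i}^{(k)} = 1$ (the denominator is positive because $t_{0} > 0$), so that $(\hat{x}_{k}, \hat{y}_{k}) = \sum^{k}_{i=0} \lambda_{i}^{(k)} (x^{i}, y^{i})$. Since a convex combination of points that all lie in a fixed ball again lies in that ball, it suffices to produce a single radius bounding every iterate $(x^{i}, y^{i})$ uniformly in $i$.

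First I would extract such a uniform radius directly from the proof of \cref{lemma:xkykbounded}. That proof establishes, for every $k \in \mathbf{N}$, the explicit estimate $\norm{(x^{k+1}, y^{k+1}) - (x^{*}, y^{*})}^{2} \leq 4R^{2} + SG^{2}$; hence $\norm{(x^{i}, y^{i}) - (x^{*}, y^{*})} \leq \sqrt{4R^{2} + SG^{2}}$ for every $i \geq 1$. For the initial index I would instead use the triangle inequality together with the assumed bounds $\norm{(x^{0}, y^{0})} \leq R$ and $\norm{(x^{*}, y^{*})} \leq R$, namely $\norm{(x^{0}, y^{0}) - (x^{*}, y^{*})} \leq \norm{(x^{0}, y^{0})} + \norm{(x^{*}, y^{*})} \leq 2R \leq \sqrt{4R^{2} + SG^{2}}$. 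Setting $M = \sqrt{4R^{2} + SG^{2}}$, we therefore have $\norm{(x^{i}, y^{i}) - (x^{*}, y^{*})} \leq M$ for \emph{all} $i \in \mathbf{N}$.

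The conclusion then follows by convexity of the norm:
\begin{align*}
\norm{(\hat{x}_{k}, \hat{y}_{k}) - (x^{*}, y^{*})}
= \norm{ \sum^{k}_{i=0} \lambda_{i}^{(k)} \left( (x^{i}, y^{i}) - (x^{*}, y^{*}) \right) }
\leq \sum^{k}_{i=0} \lambda_{i}^{(k)} \norm{(x^{i}, y^{i}) - (x^{*}, y^{*})} \leq M,
\end{align*}
so $((\hat{x}_{k}, \hat{y}_{k}))_{k \in \mathbf{N}}$ lies in the closed ball of radius $M$ about $(x^{*}, y^{*})$ and is in particular bounded. There is no genuine obstacle here: the entire content is the uniform-in-$i$ estimate $M$, which is already delivered by \cref{lemma:xkykbounded}, together with the elementary fact that balls are convex. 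The only point requiring a moment's care is the separate treatment of the index $i = 0$, since the bound coming out of \cref{lemma:xkykbounded} is phrased for the successor iterates $(x^{k+1}, y^{k+1})$ rather than for $(x^{0}, y^{0})$ itself.
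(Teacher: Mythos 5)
Your proposal is correct and follows essentially the same route as the paper, whose proof of this lemma simply observes that each $(\hat{x}_{k}, \hat{y}_{k})$ is a convex combination of the iterates $(x^{i}, y^{i})$, which are bounded by \cref{lemma:xkykbounded}. You merely make explicit what the paper leaves implicit --- the nonnegative weights summing to one, the uniform radius $\sqrt{4R^{2}+SG^{2}}$, and the separate check at index $i=0$ --- all of which is sound.
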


\begin{proof}
The required result is clear from the definition of the sequence $((\hat{x}_{k}, 
\hat{y}_{k}))_{k \in \mathbf{N}}$ and the boundedness of $((x^{k}, y^{k}))_{k 
\in \mathbf{N}}$  proved in \cref{lemma:xkykbounded}.
\end{proof}

 \begin{theorem} \label{theorem:xhatkyhatkConverge}
 Let $(x^{*}, y^{*}) \in X \times Y$ be a solution of \cref{eq:problem}.		
 Let $R$, $G$, and  $S$ be in $\mathbf{R}_{++}$. Suppose that 
\[   
\norm{(x^{0},y^{0})} \leq R, \quad \norm{(x^{*},y^{*})} \leq R,  \quad \text{and}  
\quad (\forall k \in \mathbf{N})  \norm{(g_{k}, h_{k})} \leq G, 
\]
 and that the step sizes satisfy that
\[ 
(\forall i \in \mathbf{N}) ~ t_{i} \geq 0 \text{ with } t_{0} > 0, \quad  
\sum^{\infty}_{j=0} t_{j} =\infty, \quad \text{and} \quad   \sum^{\infty}_{j=0} 
t_{j}^{2} =S < \infty. 
\]
 
Then   $f( \hat{x}_{k} ,\hat{y}_{k}) $ converges to $  f(x^{*}, y^{*})$.
 \end{theorem}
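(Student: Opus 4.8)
The plan is to sandwich the scalar quantity $f(\hat{x}_{k},\hat{y}_{k}) - f(x^{*},y^{*})$ between two expressions that both vanish as $k \to \infty$, and then apply the squeeze theorem. The required two-sided estimate is already in hand: Lemma \ref{lemma:geqleqineq}\cref{lemma:geqleqineq:sumabove} gives, for every $k \in \mathbf{N}$,
\[
-\frac{ \norm{x^{0} -\hat{x}_{k}}^{2} + \norm{y^{0} -y^{*}}^{2} }{2 \sum^{k}_{j=0} t_{j}} - \frac{ \sum^{k}_{i =0}  t_{i}^{2} (\norm{g_{i}}^{2} + \norm{h_{i}}^{2})}{2 \sum^{k}_{j=0} t_{j} } \leq f( \hat{x}_{k} ,\hat{y}_{k}) -  f(x^{*}, y^{*})  \leq  \frac{ \norm{x^{0} -x^{*}}^{2} + \norm{y^{0} -\hat{y}_{k} }^{2} }{2 \sum^{k}_{j=0} t_{j}} + \frac{ \sum^{k}_{i =0}  t_{i}^{2}(\norm{g_{i}}^{2} + \norm{h_{i}}^{2}) }{2 \sum^{k}_{j=0} t_{j} }.
\]
Thus the whole task reduces to showing that each numerator stays bounded by a constant independent of $k$, while the common denominator $2 \sum^{k}_{j=0} t_{j}$ diverges.

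The two constant numerators are handled exactly as in the proof of \cref{theorem:sumconverge}. Since $\norm{(x^{0},y^{0}) - (x^{*},y^{*})} \leq \norm{(x^{0},y^{0})} + \norm{(x^{*},y^{*})} \leq 2R$, both $\norm{x^{0}-x^{*}}^{2}$ and $\norm{y^{0}-y^{*}}^{2}$ are at most $4R^{2}$. For the subgradient-sum terms, $\sum^{k}_{i=0} t_{i}^{2}(\norm{g_{i}}^{2} + \norm{h_{i}}^{2}) = \sum^{k}_{i=0} t_{i}^{2} \norm{(g_{i},h_{i})}^{2} \leq G^{2} \sum^{\infty}_{j=0} t_{j}^{2} = G^{2}S$, uniformly in $k$.

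The genuinely new difficulty lies in the $k$-dependent numerators $\norm{x^{0}-\hat{x}_{k}}^{2}$ and $\norm{y^{0}-\hat{y}_{k}}^{2}$. This is precisely where the present result departs from \cref{theorem:sumconverge}: the comparison point here is the \emph{moving} average $(\hat{x}_{k},\hat{y}_{k})$ rather than the fixed saddle point, so uniform control is not automatic. I would resolve this by invoking \cref{lemma:xhatkyhatkbounded}, which guarantees that $((\hat{x}_{k},\hat{y}_{k}))_{k \in \mathbf{N}}$ is bounded; fixing $M \in \mathbf{R}_{++}$ with $\norm{(\hat{x}_{k},\hat{y}_{k})} \leq M$ for all $k$ yields $\norm{x^{0}-\hat{x}_{k}}^{2} \leq (\norm{x^{0}} + \norm{\hat{x}_{k}})^{2} \leq (R+M)^{2}$ and, likewise, $\norm{y^{0}-\hat{y}_{k}}^{2} \leq (R+M)^{2}$. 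This is the main obstacle, and it is structural rather than computational: it is settled by the boundedness chain \cref{lemma:basicsubgradients} $\to$ \cref{lemma:xkykbounded} $\to$ \cref{lemma:xhatkyhatkbounded}.

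With every numerator bounded by a constant and $\sum^{\infty}_{j=0} t_{j} = \infty$ forcing $2 \sum^{k}_{j=0} t_{j} \to \infty$, both the lower and the upper bounds in the sandwich tend to $0$ as $k \to \infty$. The squeeze theorem then delivers $f(\hat{x}_{k},\hat{y}_{k}) - f(x^{*},y^{*}) \to 0$, that is, $f(\hat{x}_{k},\hat{y}_{k}) \to f(x^{*},y^{*})$, as claimed.
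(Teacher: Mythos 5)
Your proposal is correct and follows essentially the same route as the paper's own proof: both sandwich $f(\hat{x}_{k},\hat{y}_{k}) - f(x^{*},y^{*})$ via \cref{lemma:geqleqineq}\cref{lemma:geqleqineq:sumabove}, control the moving-average numerators through the boundedness result of \cref{lemma:xhatkyhatkbounded}, bound the subgradient sums by $G^{2}S$, and let $\sum^{\infty}_{j=0} t_{j} = \infty$ drive both bounds to zero. You also correctly identified the boundedness of $((\hat{x}_{k},\hat{y}_{k}))_{k \in \mathbf{N}}$ as the one genuinely new ingredient beyond \cref{theorem:sumconverge}, which is exactly how the paper structures the argument.
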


\begin{proof}
As a result of our assumptions and \cref{lemma:xhatkyhatkbounded}, 
there exists a constant $Q \in \mathbf{R}_{++}$ such that 
\[ 
(\forall k \in \mathbf{N}) \quad \norm{ ( \hat{x}_{k} ,\hat{y}_{k}) } \leq Q.
\]
Combine this with our assumptions and 
\cref{lemma:geqleqineq}\cref{lemma:geqleqineq:sumabove} to establish that
\[ 
-\frac{ 4R^{2} + (R+Q)^{2}  }{2 \sum^{k}_{j=0} t_{j}}  
- \frac{ SG^{2} }{2 \sum^{k}_{j=0} t_{j} } \leq f( \hat{x}_{k} ,\hat{y}_{k}) -  f(x^{*}, y^{*})  
\leq  \frac{ 4R^{2} + (R+Q)^{2}  }{2 \sum^{k}_{j=0} t_{j}}    
+ \frac{ SG^{2} }{2 \sum^{k}_{j=0} t_{j} },
\]
which, combining with the assumption $\sum^{\infty}_{j=0} t_{j} =\infty$, 
entails that  $f( \hat{x}_{k} ,\hat{y}_{k}) \to  f(x^{*}, y^{*})  $.
\end{proof}

\section{Numerical experiments} \label{section:NumericalExperiments}
In this section, we implement our alternating subgradient method on some 
particular examples to verify our convergence results presented in previous
\cref{section:ASM} and also to analyze convergence rates of our algorithms. 
Moreover, we compare our iterate scheme with step sizes
$(t_{k})_{k \in \mathbf{N}}$ and iterate schemes 
considered in \cite{NedicOzdaglar2009}
with constant  step sizes. 
Based on our numerical results presented
in  \cref{fig:subgradient_methods_toy_0.png},
we see benefits of replacing constant step sizes by step sizes 
$(t_{k})_{k \in \mathbf{N}}$  satisfying
$(\forall i \in \mathbf{N}\smallsetminus \{0\} )$ $t_{i} \geq 0 $, $t_{0} > 0$,
$\sum^{\infty}_{j=0} t_{j} =\infty$, and $\sum^{\infty}_{j=0} t_{j}^{2} < \infty$. 
In addition, in view of \cref{fig: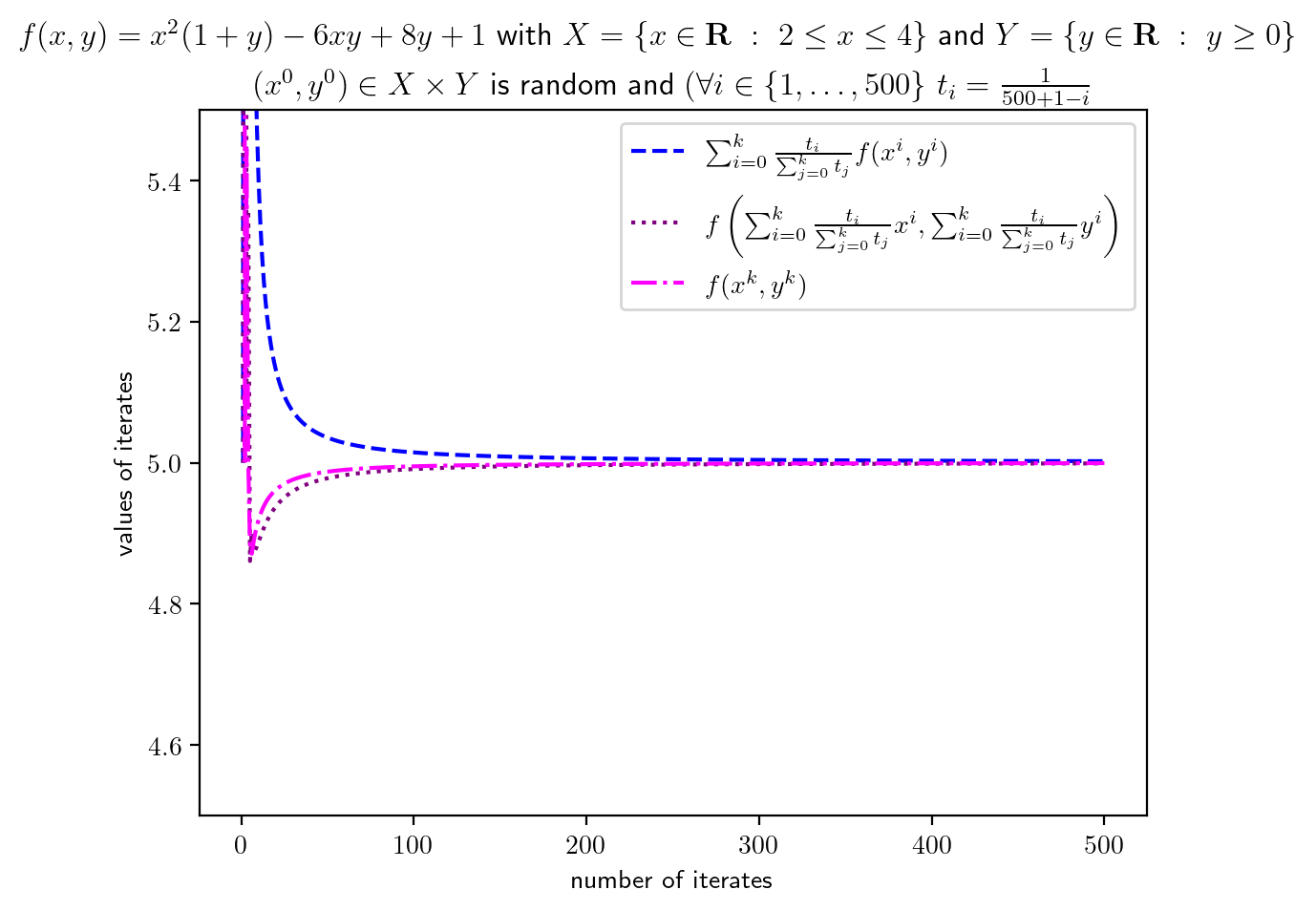},
\cref{fig:subgradient_methods_lp_random_0.png},
\cref{fig:subgradient_methods_lsl1_random_0.png}, 
and \cref{fig: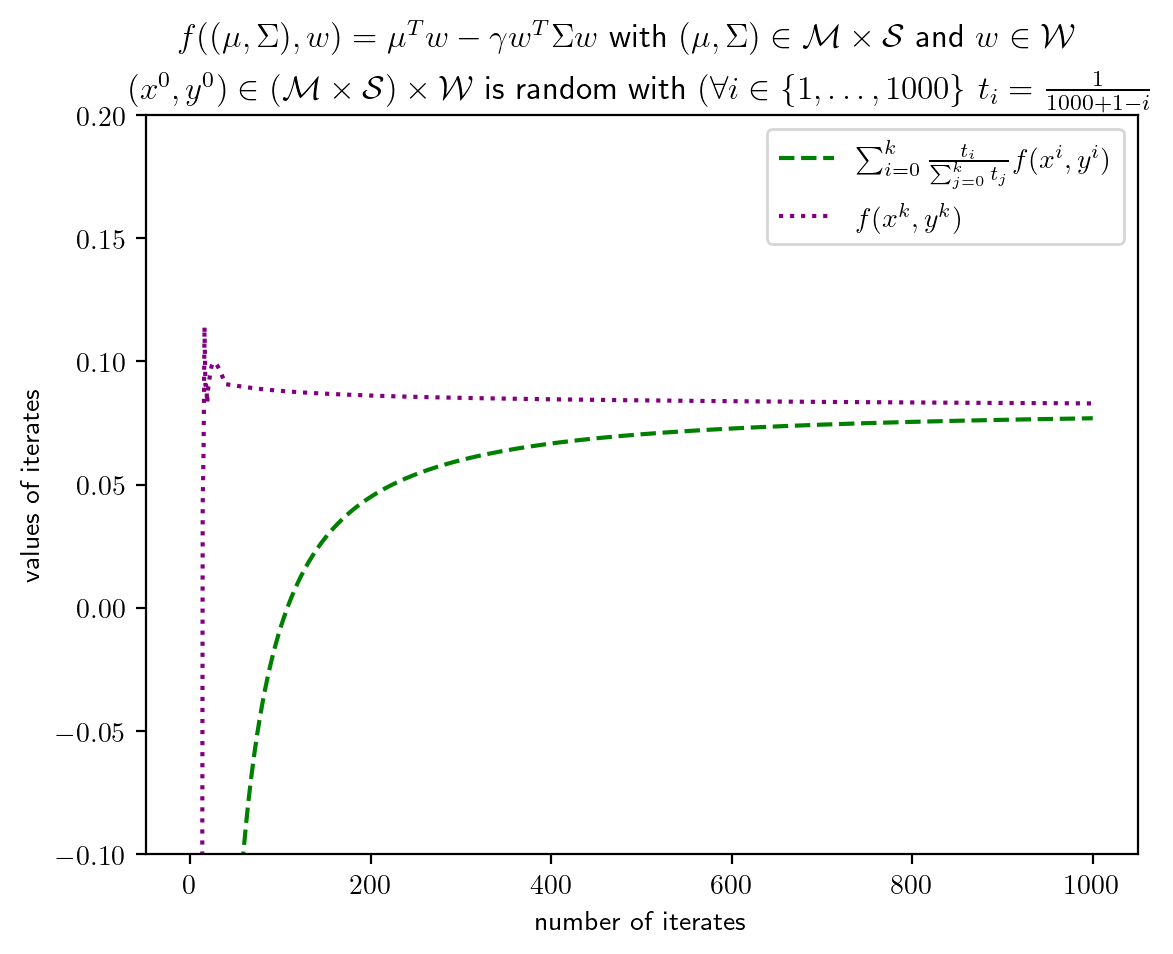} below obtained from our numerical experiments,
we discover the convergence $f(x^{k},y^{k}) \to f(x^{*},y^{*})$
in multiple examples,
which doesn't have any theoretical support yet.

We know that generally subgradient methods converge slowly. 
Normally, the iterate point corresponding to larger iterate number 
has better convergence performance. 
Therefore, to accelerate our subgradient methods for solving 
convex-concave saddle-point problems, in our experiments below 
we reorder sequence $(t_{k})_{k \in \mathbf{N}}$ 
in descending order.
Based on our numerical results, this trick works very well.

In this section, unless stated otherwise, 
$\left((x^{k}, y^{k})\right)_{0 \leq k \leq K}$    
is generated by \cref{eq:algorithm} with $K$ being the number of iterates
and step sizes $(\forall k \in \{1, \ldots, K\})$ $t_{k} =\frac{1}{K+1 -k}$.
Moreover, we have
\[ 
(\forall k \in \mathbf{N})\quad (\hat{x}_{k},\hat{y}_{k})=\left(\sum^{k}_{i =0} 
\frac{t_{i}}{ \sum^{k}_{j=0} t_{j}} x^{i}, \sum^{k}_{i =0} \frac{t_{i}}{ \sum^{k}_{j=0} 
	t_{j}} y^{i}\right).
\] 
Note that the choice of step sizes
\[ (\forall k \in \{1, \ldots, K\}) \quad t_{k} =\frac{1}{K+1 -k} \]
satisfies  
$(\forall i \in \mathbf{N}\smallsetminus \{0\} )$ $t_{i} \geq 0 $, $t_{0} > 0$,
$\sum^{\infty}_{j=0} t_{j} =\infty$, and $\sum^{\infty}_{j=0} t_{j}^{2} < \infty$,
which is required in our convergence results 
$\sum^{k}_{i =0} \frac{t_{i}}{ \sum^{k}_{j=0} t_{j}} f(x^{i}, y^{i}) \to  f(x^{*}, y^{*}) $
and 
$f( \hat{x}_{k} ,\hat{y}_{k}) \to  f(x^{*}, y^{*})$
provided in \cref{theorem:sumconverge} and  \cref{theorem:xhatkyhatkConverge},
respectively.

\subsection{Toy example} \label{subsection:toyexample}
In \cite[Proposition~3.1]{NedicOzdaglar2009},   
the authors considered the convergence of 
$\frac{1}{ k+1}	\sum^{k}_{i =0}  f(x^{i}, y^{i}) \to f(x^{*}, y^{*}) $  
and 
$f(\tfrac{1}{k}\sum^{k-1}_{i=0} x^{i}, \tfrac{1}{k}\sum^{k-1}_{i=0} y^{i}) \to 
f(x^{*},y^{*}) $ 
within certain error level and with some boundedness and compactness assumptions, 
where $f:X\times Y \to \mathbf{R}$ is a convex-concave function,  $(x^{*},y^{*}) $ is a 
saddle-point of $f$, and  $\left((x^{k}, y^{k})\right)_{k \in \mathbf{N}}$
is generated by \cref{eq:algorithm} 
with $(\forall k \in \mathbf{N})$ $t_{k} \equiv \alpha \in \mathbf{R}_{++}$. 

In this subsection, we compare our iterate scheme with 
iterate schemes associated with step sizes being a constant 
to show the drawback of constant step sizes.

We consider the toy convex-concave function  $f : \mathbf{R} \times \mathbf{R}_{+}  
\to \mathbf{R}$ defined by 
\[ 
(\forall (x,y) \in \mathbf{R} \times \mathbf{R}_{+}) \quad f(x,y) = x^2 (1+y) - 6xy +8y+1.
\]
which is considered in \cref{example:lagrange}\cref{example:lagrange:toy}.  As a 
consequence of 
\cref{fact:saddlepoint}, $(2,2)$ is a saddle-point of $f$. So all desired sequences 
of iterates must converge to  $f(2,2) =5$. 

In our experiments related to \cref{fig:subgradient_methods_toy_0.png}, 
we mainly consider
$\left(	\sum^{k}_{i =0}  \frac{t_{i}}{ \sum^{k}_{j =0}t_{j} }  f(x^{i}, y^{i})  \right)_{k \in 
 \mathbf{N}}$ and $\left(  f(\hat{x}_{k},\hat{y}_{k}) \right)_{k \in \mathbf{N}}$ 
with  $(\forall k \in \mathbf{N})$ $t_{i} =\alpha \in \mathbf{R}_{++}$ and
for  different values of $\alpha$. 
Note that although when $(\forall k \in \mathbf{N})$ $t_{i} =\alpha \in \mathbf{R}_{++}$,
$(\forall i \in \mathbf{N})$  $\frac{t_{i}}{ \sum^{k}_{j =0}t_{j} }  =\frac{1}{k+1}$
 independent of $\alpha$, the sequence of iterates $((x^{k},y^{k}))_{k \in \mathbf{N}}$ 
 generated by the iterate scheme \cref{eq:algorithm} is dependent on the sequence 
 $(t_{i})_{i\in \mathbb{N}}$. 
 So  different values of $\alpha$ indeed deduce different sequences of iterates in 
 consideration.
 Based on our results, when $\alpha >1$, 
 both $\left( \frac{1}{ k+1}	\sum^{k}_{i =0}  f(x^{i}, y^{i})  \right)_{k \in \mathbf{N}}$ 
 and $\left(   f(\hat{x}_{k},\hat{y}_{k}) \right)_{k \in \mathbf{N}}$  
 generally don't converge to the desired  value $5$. 
 
 To get the following 
 \cref{fig:subgradient_methods_toy_0.png}, 
we calculate $\left( \frac{1}{ k+1}	\sum^{k}_{i =0}  f(x^{i}, y^{i})  \right)_{0 \leq k \leq 
200}$ and $\left(  f(\hat{x}_{k},\hat{y}_{k}) \right)_{0 \leq k \leq 200}$ with a 
random chosen initial point and with setting 
$\alpha =1, \alpha =0.8, \alpha =0.5, \alpha = 0.1, \alpha =0.01$, 
and $\alpha =0.0001$, respectively.    
Moreover, we also calculate 
$\left( \frac{1}{ k+1}	\sum^{k}_{i =0}  f(x^{i}, y^{i})  \right)_{0 \leq k \leq 200}$ 
and $\left(  f(\hat{x}_{k},\hat{y}_{k}) \right)_{0 \leq k \leq 200}$ with
$(\forall k \in \{1, \ldots, 200\})$ $t_{k} =\frac{1}{200+1 -k}$ as a reference.

According to the first subplot of \cref{fig:subgradient_methods_toy_0.png}, 
we observe that  
generally sequences of iterates associated with constant step sizes
don't converge to required optimal values.
(In our numerous related experiments for this particular example,  
we found that only when $(\forall i \in \mathbf{N})$ $t_{i}=0.1$,  the sequences 
$\left( \frac{1}{ k+1}	\sum^{k}_{i =0}  f(x^{i}, y^{i})  \right)_{k \in \mathbf{N}}$ 
and $\left(  f(\hat{x}_{k},\hat{y}_{k}) \right)_{k \in  \mathbf{N}}$ 
converge to the required value $5$ consistently, 
regardless of the random initial points and problem data. )
Moreover, the second subplot of \cref{fig:subgradient_methods_toy_0.png}
shows that the convergence rate of  
our iterate scheme with $(\forall k \in \{1, \ldots, 200\})$ $ t_{k} =\frac{1}{200+1 -k} $
is faster than iterates schemes associated with constant step sizes.
\cref{fig:subgradient_methods_toy_0.png} shows some drawbacks of constant step 
sizes
and explains why should we consider $(t_{k})_{k \in \mathbf{N}}$ not being a sequence 
of constant. 

In fact, in our experiments associated with this example,
we calculated $(  f(x^{k},y^{k}) )_{1 \leq k \leq 200}$ together with 
$\left( \frac{1}{ k+1}	\sum^{k}_{i =0}  f(x^{i}, y^{i})  \right)_{1 \leq k \leq 200}$ and 
$\left(  f(\hat{x}_{k},\hat{y}_{k}) \right)_{1 \leq k \leq 200}$.
We don't present them in \cref{fig:subgradient_methods_toy_0.png} because 
when step sizes are constant,
the performance of   $(  f(x^{k},y^{k}) )_{1 \leq k \leq 200}$ is much worse than 
performances of sequences presented in \cref{fig:subgradient_methods_toy_0.png} 
below.

\begin{figure}[H]
\centering
\includegraphics[width=0.8\textwidth]{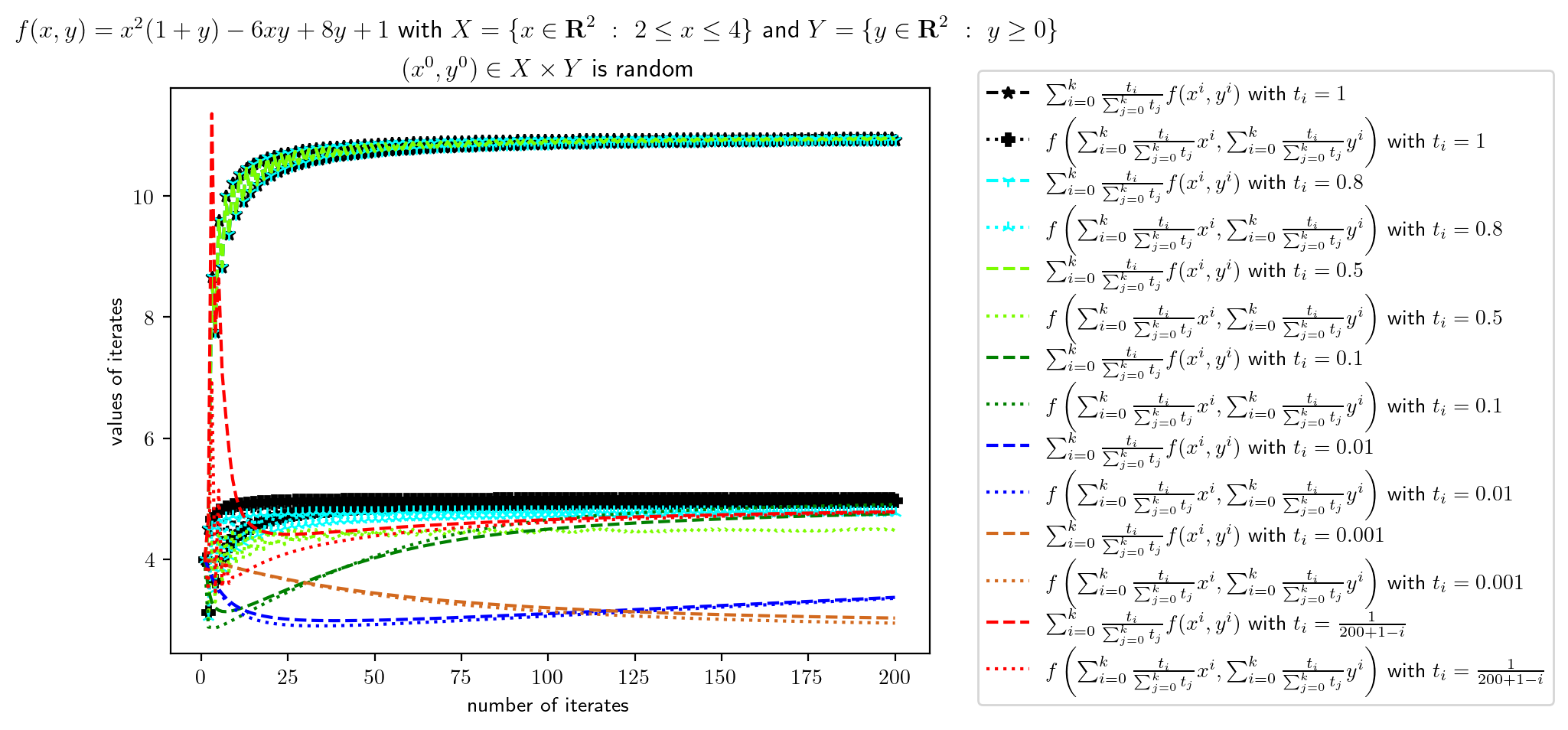}
\includegraphics[width=0.8\textwidth]{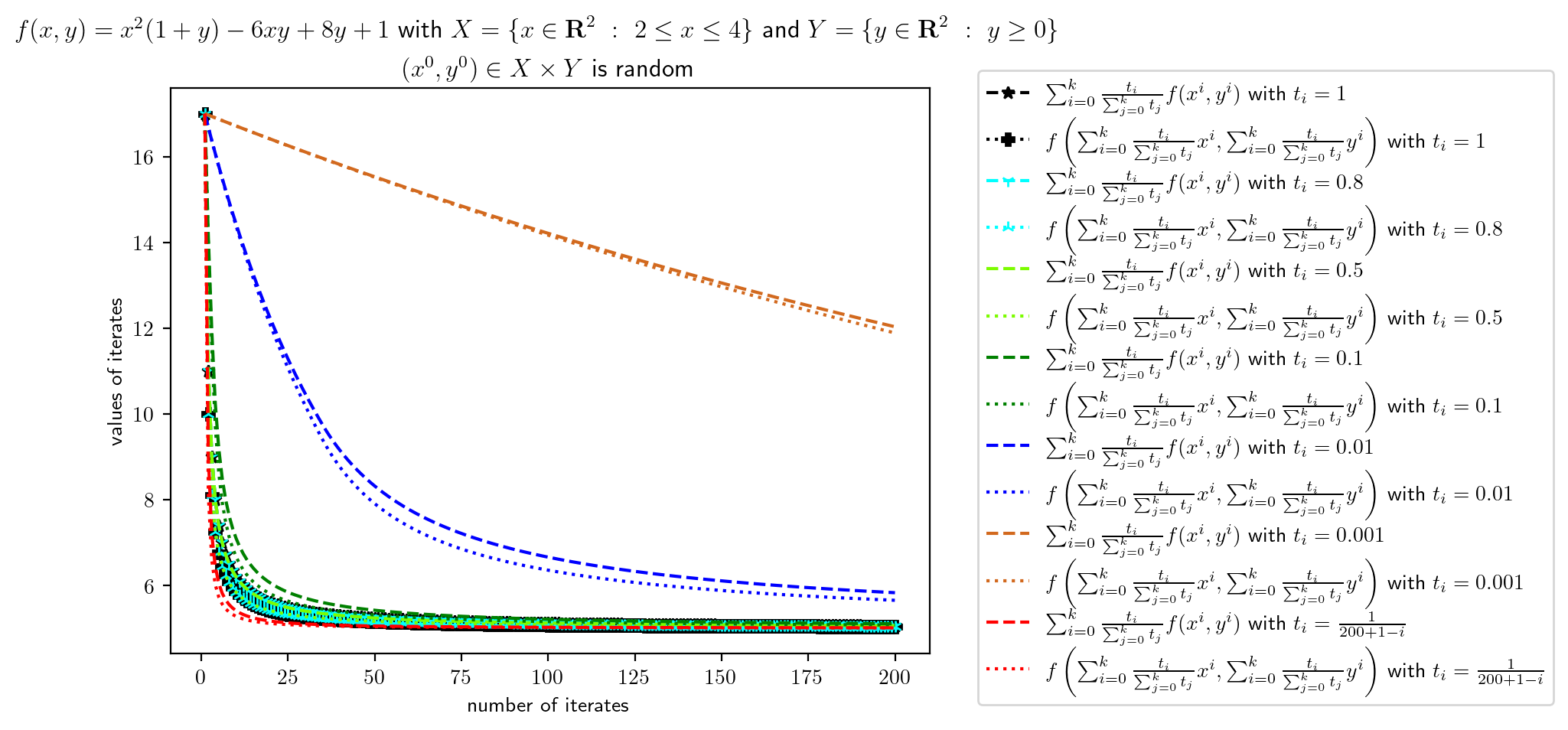}
\caption{Comparison with iterate scheme associated with constant step sizes}
\label{fig:subgradient_methods_toy_0.png}
\end{figure}

Then  we randomly chose  initial points and calculate  the sequence $\left(	
\sum^{k}_{i =0}  \frac{t_{i}}{ 
\sum^{k}_{j =0}t_{j} }  f(x^{i}, y^{i})  
\right)_{1 \leq k \leq 500}$, $\left(  f(\hat{x}_{k},\hat{y}_{k}) \right)_{1 \leq k \leq 
500}$, and $(  f(x^{k},y^{k}) )_{1 \leq k \leq 500}$ with  
$(\forall k \in \{1, \ldots, 500\})$ $t_{k} =\frac{1}{500+1 -k}$.
 Our result is presented in \cref{fig:subgradient_methods_toy_t_0.png} below. 
 Note that to get a clearer view on the convergence rate, we zoom 
 in more interesting part  and set the range of y-axis view as   $[4.5, 5.5]$.
 
 The theoretical convergence of $\sum^{k}_{i =0}  \frac{t_{i}}{ 
 	\sum^{k}_{j =0}t_{j} }  f(x^{i}, y^{i})   \to f(x^{*},y^{*})$  and 
 $f(\hat{x}_{k},\hat{y}_{k})  \to f(x^{*},y^{*}) $ is 
 presented in \cref{theorem:sumconverge,theorem:xhatkyhatkConverge}.
 Although the convergence of $ f(x^{k},y^{k})  \to f(x^{*},y^{*}) $  is not provided 
 theoretically yet, it is shown numerically in 
 \cref{fig:subgradient_methods_toy_t_0.png},
 which motivates our future work on the convergence of $ f(x^{k},y^{k})  \to 
 f(x^{*},y^{*}) $ or $(x^{k},y^{k})  \to (x^{*},y^{*})$.
\begin{figure}[H]
\centering
\includegraphics[width=0.8\textwidth]{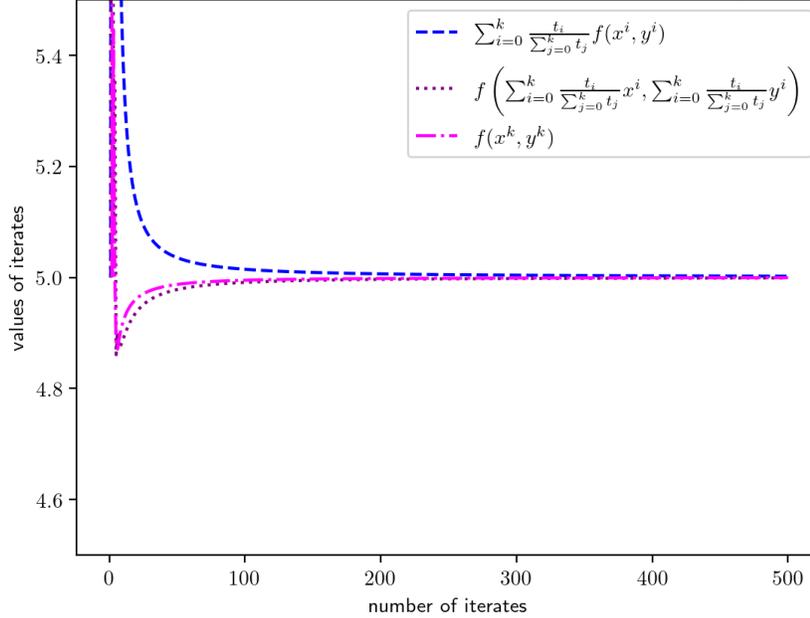}
\caption{Convergence result with randomly chosen initial points}
\label{fig:subgradient_methods_toy_t_0.png}
\end{figure}

\subsection{Linear program in inequality form} \label{subsection:LP}
Let $A \in \mathbf{R}^{m \times n}$,  $b  \in \mathbf{R}^{m}$,  and $c \in 
\mathbf{R}^{n}$.  
In this part, we consider the convex-concave function 
 $f : \mathbf{R}^{n}  \times \mathbf{R}^{m}_{+}  \to \mathbf{R}$ defined as 
\[ 
(\forall (x,y) \in \mathbf{R}^{n}  \times \mathbf{R}^{m}_{+} ) \quad	
f(x,y)  = y^{T}Ax +c^{T}x   -b^{T}y,
\] 
which is presented in \cref{example:lagrange}\cref{example:lagrange:lp}. 

 In our experiments, after randomly choosing $A \in \mathbf{R}^{100 \times 10}$, 
 $b  \in \mathbf{R}^{100}$, and $c \in \mathbf{R}^{10}$, we apply the 
 Python-embedded 
 modeling language CVXPY (see \cite{DiamondBoyd2016} for details) to find 
 bounded  and feasible problems. 
 Recall from \cref{example:lagrange}\cref{example:lagrange:lp} 
 that the convex-concave function $f$ above is the Lagrangian of a linear 
 programming with an inequality constraint and that the optimal solutions of the 
 related  primal and dual problem are both equal to the value of $f$ over a saddle-point. 
 We also solve the corresponding primal and dual problems by CVXPY
 to check the correctness of results from our algorithms. 
 
After finding problems with optimal solutions, we randomly choose initial points 
$(x^{0},y^{0}) \in \mathbf{R}^{10} \times \mathbf{R}^{100}$ and calculate  the 
sequence $\left(	\sum^{k}_{i =0}  \frac{t_{i}}{ \sum^{k}_{j =0}t_{j} }  f(x^{i}, y^{i})  
\right)_{1 \leq k \leq 100}$, $\left(  f(\hat{x}_{k},\hat{y}_{k}) \right)_{1 \leq k \leq 100}$, 
and $(  f(x^{k},y^{k}) )_{1 \leq k \leq 100}$ with  $(\forall k \in \{1, \ldots, 100\})$ $t_{k} 
=\frac{1}{100+1 -k}$.
We presented one result in \cref{fig:subgradient_methods_lp_random_0.png} 
below. 
Note that to see only the important range of y-axis,  we zoom in and set the y-axis 
limit on the picture as $[f(x^{*},y^{*})-1, f(x^{*},y^{*})+1]$,
where the optimal value
$f(x^{*},y^{*})$ is obtained from our  CVXPY code. 
Because in this case $f$ is linear, it's not a surprise  that $\left(	\sum^{k}_{i =0}  
\frac{t_{i}}{ \sum^{k}_{j =0}t_{j} }  f(x^{i}, y^{i})  
\right)_{1 \leq k \leq 100}$ and
$\left(  f(\hat{x}_{k},\hat{y}_{k}) \right)_{1 \leq k \leq 100}$  on 
\cref{fig:subgradient_methods_lp_random_0.png} are consistent.
 \begin{figure}[H]
\centering
\includegraphics[width=0.8\textwidth]{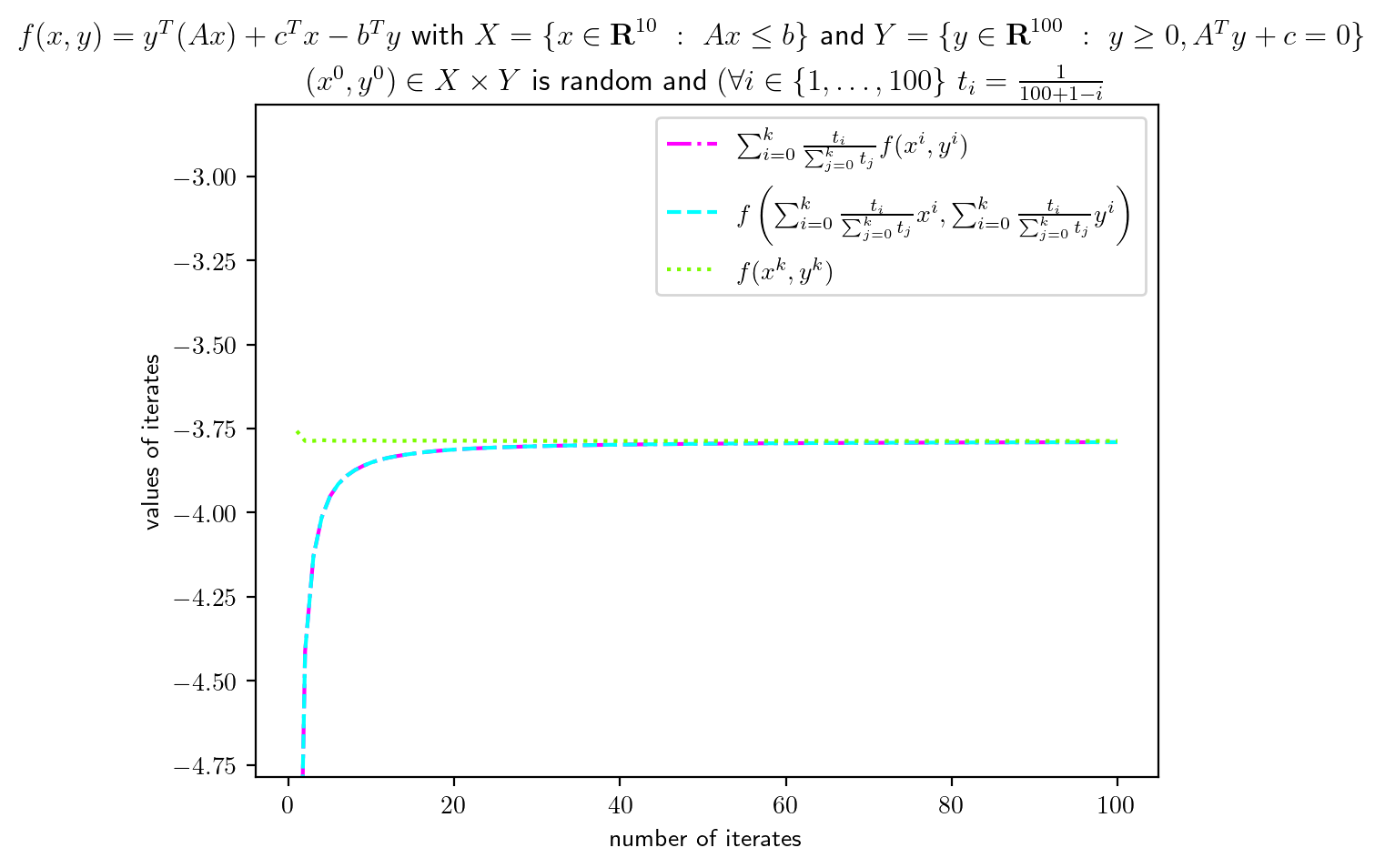}
\caption{Lagrangian of the inequality form LP}
\label{fig:subgradient_methods_lp_random_0.png}
\end{figure}
In all our experiments, the convergent point of our sequence of iterates
is identical with the optimal solutions obtained by CVXPY. 
The convergence of 
$\left(	\sum^{k}_{i =0}  \frac{t_{i}}{ \sum^{k}_{j =0}t_{j} }  f(x^{i}, y^{i})  \right)_{1 \leq k 
\leq 100}$ 
and
$\left(  f(\hat{x}_{k},\hat{y}_{k}) \right)_{1 \leq k \leq 100}$ confirms numerically our 
theoretical results in \cref{theorem:sumconverge,theorem:xhatkyhatkConverge}. 
Again, $(  f(x^{k},y^{k}) )_{1 \leq k \leq 100}$  converges to our required point 
although we have no theoretical support of the convergence yet. 

\subsection{Least-squares problem with \texorpdfstring{$\ell_{1}$}{$l_{1}$} 
regularization} 
\label{subsection:LSl1}
Let $A \in \mathbf{R}^{m \times n}$, $b  \in \mathbf{R}^{m}$, and $\gamma \in 
\mathbf{R}_{++}$. 
We consider the convex-concave function  
$f: \mathbf{R}^{n+m} \times \mathbf{R}^{m} \to \mathbf{R}$ defined as 
\[ 
(\forall ((x,u),y) \in \mathbf{R}^{n+m} \times \mathbf{R}^{m} )	
\quad  f((x,u),y) = \frac{1}{2} \norm{u}^{2}_{2} + \gamma \norm{x}_{1} + y^{T}( Ax -b -u),
\]
which is considered in \cref{example:lagrange}\cref{example:lagrange:lsl1} and 
\cref{example:lsl1:subgradient}.
As stated in \cref{example:lagrange}\cref{example:lagrange:lsl1}, 
$f$ is the Lagrangian of a least-squares problem with $\ell_{1}$ regularization. 
In our experiments, we set $\gamma =1$ and 
  randomly choosing $A \in \mathbf{R}^{100 \times 50}$ and $b \in 
\mathbf{R}^{100}$. Note that in this case the problem is always feasible and 
bounded. 
Similarly with \cref{subsection:LP}, 
we apply CVXPY  to solve related primal and dual problems. 

Consider $X =\mathbf{R}^{100+50}$ and $Y = \{ y \in \mathbf{R}^{100} ~:~ 
\norm{A^{T}y}_{\infty} \leq \gamma  \}$.  We randomly choose initial points 
$(x^{0},y^{0}) \in \mathbf{R}^{100+50}  \times \mathbf{R}^{100}$ and implement  
the sequences 
$\left(	\sum^{k}_{i =0}  \frac{t_{i}}{ \sum^{k}_{j =0}t_{j} }  f(x^{i}, y^{i})  
\right)_{1 \leq k \leq 500}$, 
$\left(  f(\hat{x}_{k},\hat{y}_{k}) \right)_{1 \leq k \leq 500}$, 
and 
$(  f(x^{k},y^{k}) )_{1 \leq k \leq 500}$ 
with  $(\forall k \in \{1, \ldots, 500\})$ $t_{k} 
=\frac{1}{500+1 -k}$.
We show one result in \cref{fig:subgradient_methods_lsl1_random_0.png} 
in which the convergent point is consistent with the optimal solution
obtained by CVXPY for corresponding primal and dual problems. 
With the optimal value
$f(x^{*},y^{*})$  obtained by our related CVXPY code, 
we zoom in and set the y-axis limit  on the picture as $[f(x^{*},y^{*})-1, 
f(x^{*},y^{*})+1]$ to see only the important range of y-axis.
It's interesting that $(  f(x^{k},y^{k}) )_{1 \leq k \leq 500}$ converges to 
the required optimal value  in this example as well. 
 
\begin{figure}[H]
\centering
\includegraphics[width=0.8\textwidth]{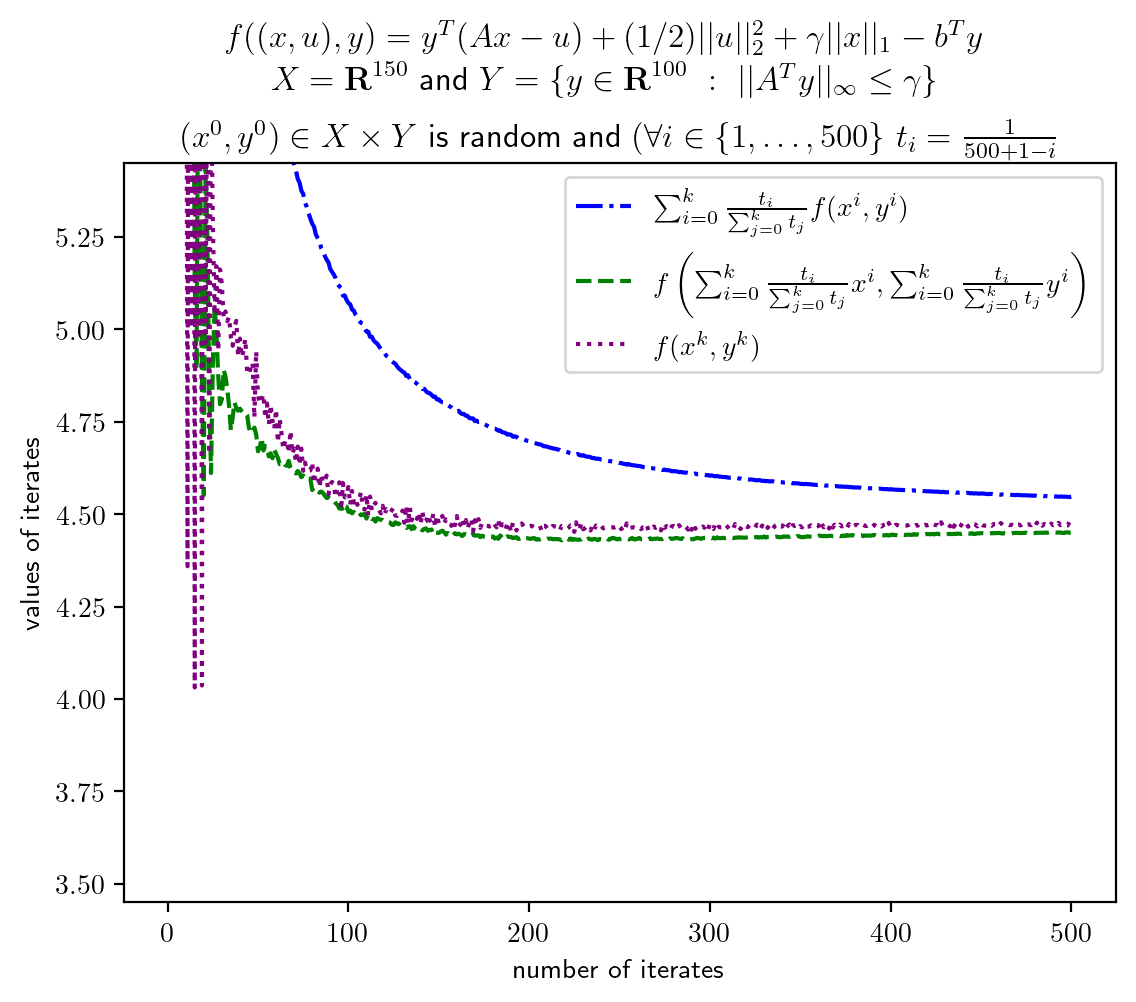}
\caption{Lagrangian of the least-squares problem with $\ell_{1}$ regularization}
\label{fig:subgradient_methods_lsl1_random_0.png}
\end{figure}

\subsection{Matrix game} \label{subsection:matrixgame}
We consider an easy version of the game interpretation of saddle-point problems 
below 
(see, e.g., \cite[Section~5.4.3]{BV2004} for details). 
Let   $C \in \mathbf{R}^{m \times 
n}$ 
and let $X \subseteq \mathbf{R}^{n}$ and $Y \subseteq \mathbf{R}^{m}$ be nonempty 
closed 
and convex subsets. 
The subject function in this case is $f: \mathbf{R}^{n} \times  \mathbf{R}^{m} 
\to 
\mathbf{R}$ defined as 
\[ 
	(\forall (x,y)\in X \times  Y) \quad f(x,y) = x^{T} Cy.
\]
In our experiment, we use the example of matrix game in 
\cite[Sections~5.2 and 5.3]{SLB2023} and set 
\begin{align*}
	&C = \begin{pmatrix}
		1 &2\\
		3&1
	\end{pmatrix},\\
&X:= \{ x \in \mathbf{R}^{2} ~:~ \sum^{2}_{i=1} x_{i} =1 \text{ and }(\forall i \in \{1,2\})~ 
x_{i} 
\geq 0  \}, \text{ and}\\
&Y:= \{ y \in \mathbf{R}^{2} ~:~ \sum^{2}_{i=1} y_{i} =1 \text{ and }(\forall i \in \{1,2\}) 
~y_{i} 
\geq 0  \}.
\end{align*}
In view of \cite[Section~5.3]{SLB2023}, the optimal value (that is the value of $f$ 
over the saddle-point) is $1.6667$ with keeping 4 decimal places.

In our experiments, we randomly 
choose initial points $(x^{0},y^{0}) \in X \times Y$ and calculate the sequences 
$\left(	
\sum^{k}_{i =0}  
\frac{t_{i}}{ \sum^{k}_{j =0}t_{j} }  f(x^{i}, y^{i})  
\right)_{1 \leq k \leq 1000}$, 
$\left(  f(\hat{x}_{k},\hat{y}_{k}) \right)_{1 \leq k \leq 1000}$, 
and 
$(  f(x^{k},y^{k}) )_{1 \leq k \leq 1000}$  with multiple choices of $(t_{k})_{k \in 
\mathbf{N}}$ but we noticed that in our experiments $\left(	\sum^{k}_{i =0}  
\frac{t_{i}}{ \sum^{k}_{j =0}t_{j} }  f(x^{i}, y^{i})  \right)_{1 \leq k \leq 1000}$ and 
$(  f(x^{k},y^{k}) )_{1 \leq k \leq 1000}$ converge very slow. 
To get the following \cref{fig: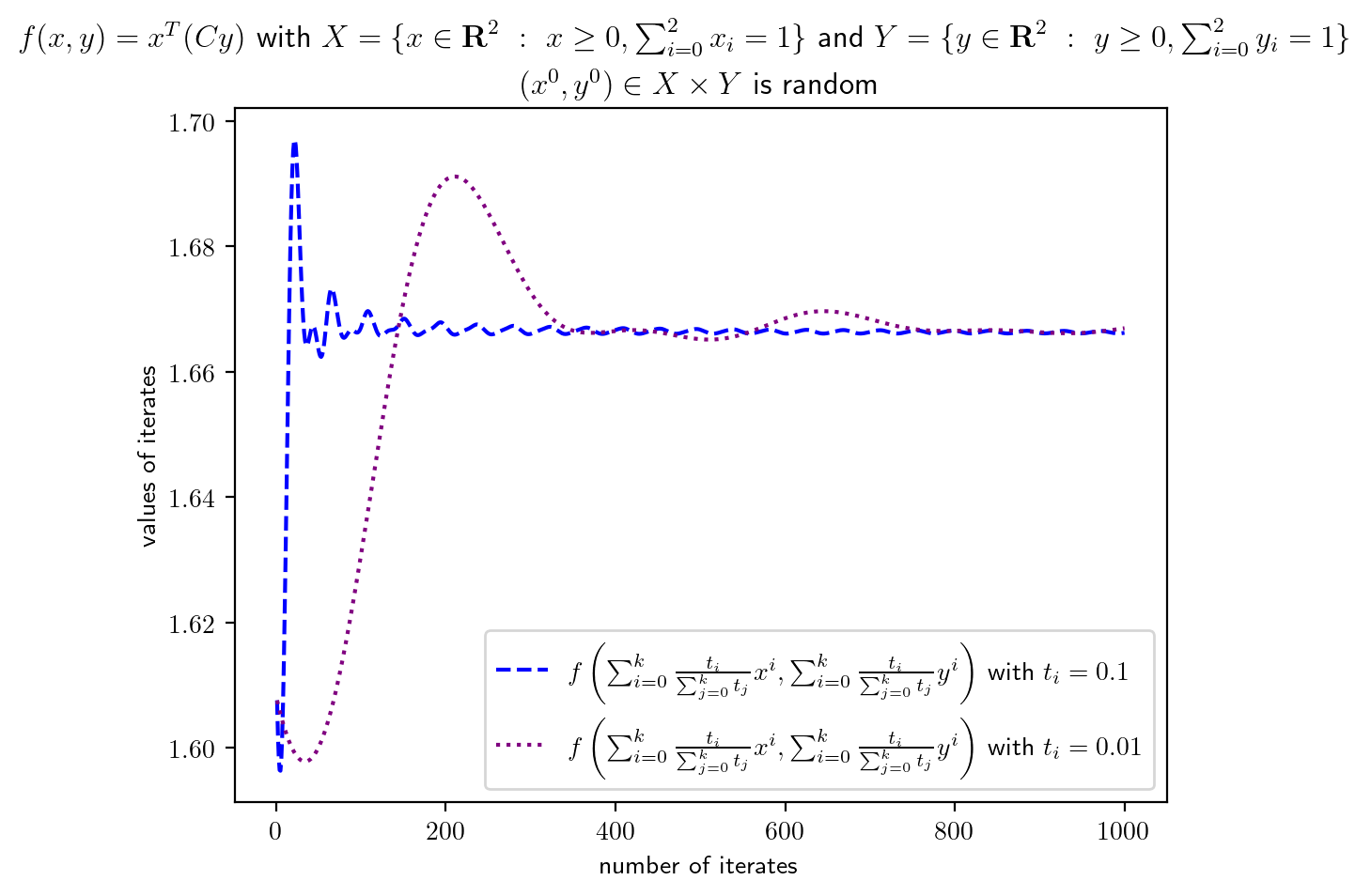}, we  randomly 
choose initial points $(x^{0},y^{0}) \in X \times Y$  and implement $\left(  
f(\hat{x}_{k},\hat{y}_{k}) \right)_{1 \leq k \leq 1000}$ with two cases of the parameter 
 $(t_{k})_{k \in \mathbf{N}}$: $(\forall i \in \mathbf{N})$ $t_{i}=0.1$ 
 and $(\forall i \in \mathbf{N})$ $t_{i}=0.01$. 
(We considered also other choices of $(t_{k})_{k \in \mathbf{N}}$ 
including $(\forall k \in \{1, \ldots, 1000\})$ $t_{k} =\frac{1}{1000+1 -k}$, but their 
convergence performances are not good. We calculated also $\left(	\sum^{k}_{i =0}  
\frac{t_{i}}{ \sum^{k}_{j =0}t_{j} }  f(x^{i}, y^{i})  \right)_{1 \leq k \leq 1000}$ and 
$(  f(x^{k},y^{k}) )_{1 \leq k \leq 1000}$ in the experiment associated with 
\cref{fig:appmg_subgradient_methods_0.png}, but they converge very slowly.) 
We observe that, in \cref{fig:appmg_subgradient_methods_0.png},  the convergent point 
is indeed the optimal solution obtained by 
\cite[Section~5.3]{SLB2023}.
\begin{figure}[H]
\centering
\includegraphics[width=0.6\textwidth]{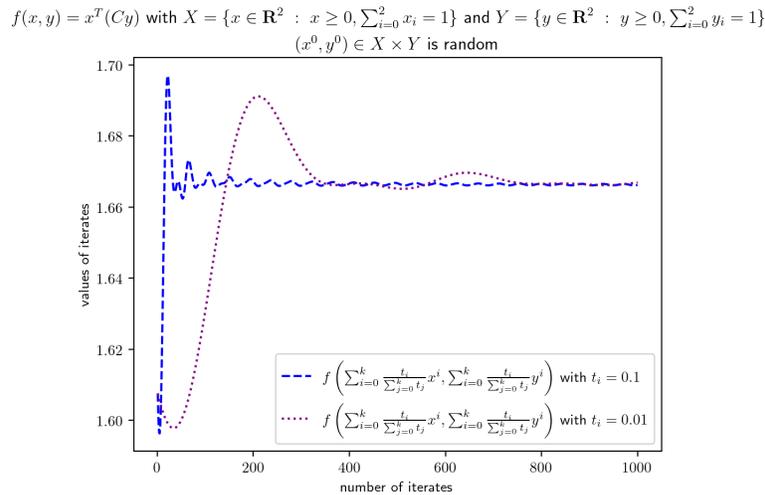}
\caption{Matrix game with  constant step sizes}
\label{fig:appmg_subgradient_methods_0.png}
\end{figure}
\subsection{Robust Markowitz portfolio construction problem}
\label{subsection:rmpcp}
As a last example in this section, we consider the robust 
Markowitz portfolio construction problem. 
(See, e.g., \cite{SLB2023} or \cite{ BBDKKNS2017} for details.)  
We consider the  robust Markowitz portfolio construction problem presented 
in  \cite[Sections~3.4 and 6.3]{SLB2023} here.
Let $\bar{\mu} \in \mathbf{R}^{n}$ be the  nominal mean, 
let $\bar{\Sigma} \in \symm^{n}_{++}$ be the nominal covariance, 
and  let $\gamma \in \mathbf{R}_{++}$. Let 
 $\mathcal{W} \subseteq \mathbf{R}^{n}$ be a convex set of feasible portfolios and let 
\begin{align*}
 \mathcal{M}  =& \{\bar{\mu}+\delta ~:~ (\forall i \in \{1,2,\ldots, n\}) \abs{\delta_{i}} 
 \leq \rho_{i}\},\\
\mathcal{S}  =& \{\bar{\Sigma}+\Delta ~:~ \bar{\Sigma}+\Delta \in \symm^{n}_{+} 
\text{ and } (\forall i,j \in \{1,2,\ldots, n\})    \abs{\Delta_{ij}} \leq \eta(\bar{\Sigma}_{ii}  
\bar{\Sigma}_{jj}   )^{\tfrac{1}{2}}\},
\end{align*}
  where  $\rho \in \mathbf{R}^{n}_{++}$ is a vector of uncertainties in the forecast 
  returns, 
  $\delta \in \mathbf{R}^{n}$ is the perturbation of the nominal mean $\bar{\mu}$,  
$\Delta \in \symm^{n}$ is the perturbation of the nominal covariance 
$\bar{\Sigma}$, and $\eta \in (0,1)$ is a parameter scaling the perturbation 
to the forecast covariance matrix. 

In this case, the convex-concave function is $f: \mathcal{M} \times 
\mathcal{S}  
\times \mathcal{W}   \to \mathbf{R}$ defined as 
\[ 
(\forall (x,y)= ((\mu,\Sigma),w) \in ( \mathcal{M} \times \mathcal{S} ) \times \mathcal{W}  )	
\quad  f(x,y)=f((\mu,\Sigma),w) = \mu^{T}w -\gamma w^{T}\Sigma w.
\]

We also use the data ($\bar{\mu}$, $\bar{\Sigma}$, $\rho$, $\eta$, and $\gamma$) 
worked in \cite[Sections~6.3]{SLB2023}  where $n=6$. In view of the result 
obtained therein, the optimal value 
(that is, the value $f((\mu^{*},\Sigma^{*}),w^{*}) $ over the saddle-point) is $0.076$.

We randomly choose $x^{0}= (\mu^{0},\Sigma^{0} ) \in  \mathcal{M} \times 
\mathcal{S} $ and $ y^{0}=w^{0} \in \mathcal{W} $ and calculate the 
sequences 
$\left(	\sum^{k}_{i =0}  \frac{t_{i}}{ \sum^{k}_{j =0}t_{j} }  f(x^{i}, y^{i})  \right)_{1 \leq k 
	\leq 1000}$
and $(  f(x^{k},y^{k}) )_{1 \leq k \leq 1000}$ 
with  $(\forall k \in \{1, \ldots, 1000\})$ $t_{k} =\frac{1}{1000+1 -k}$.
 We present one result in \cref{fig:app_rmpc_0.png} and see clearly 
 that $\left(	\sum^{k}_{i =0}  \frac{t_{i}}{ \sum^{k}_{j =0}t_{j} }  f(x^{i}, y^{i})  \right)_{1 
 \leq k 
 	\leq 1000}$ 
 and  $\left(  f(x_{k}, y_{k}) \right)_{k \in \mathbf{N}}$ converge to the required 
 optimal value $0.076$ in $1000$ iterates.
 The performance of 
 $\left(f(\hat{x}_{k},\hat{y}_{k}) \right)_{k \in \mathbf{N}}$ 
 in our experiments of this example is not good
 so we don't present it in \cref{fig:app_rmpc_0.png} below.
 
\begin{figure}[H]
\centering
\includegraphics[width=0.6\textwidth]{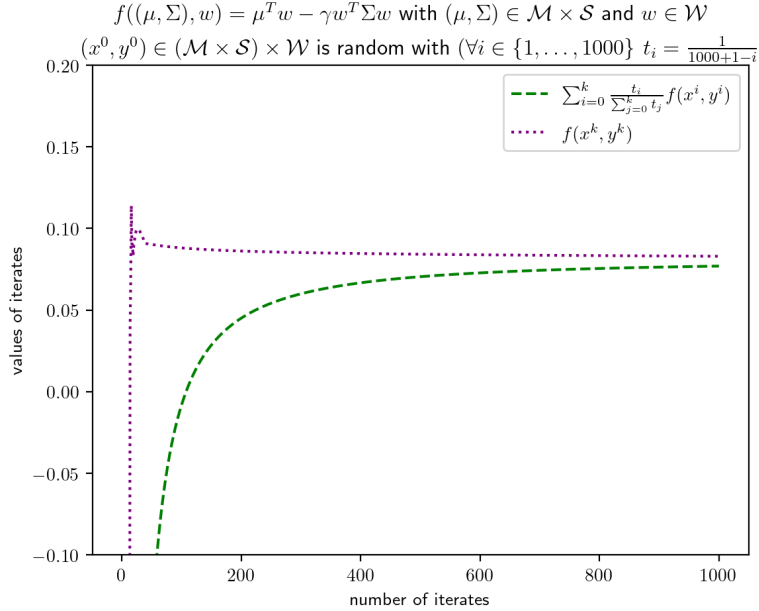}
\caption{Convergence of robust Markowitz portfolio construction problem}
\label{fig:app_rmpc_0.png}
\end{figure}

 \section{Conclusion} \label{section:conclusion}
In this work,  
we proved some convergence results  of our alternating subgradient method 
for  convex-concave saddle-point problems associated with
general convex-concave functions. 
Let $(x^{*}, y^{*}) \in X \times Y$ be a saddle-point of 
a convex-concave function $f$, 
and let $((x^{k}, y^{k}))_{k \in \mathbf{N}}$ be 
a sequence of iterates 
generated by  our alternating subgradient method 
associated with the sequence $(t_{k})_{k \in \mathbf{N}}$ of step sizes.
We presented the convergence 
$	\sum^{k}_{i =0} \frac{t_{i}}{ \sum^{k}_{j=0} t_{j}} f(x^{i}, y^{i}) \to  f(x^{*}, y^{*}) $ 
under some popular assumptions on the step-size. 
With the same assumption, we also showed 
$f\left( \sum^{k}_{i =0} 
\frac{t_{i}}{ \sum^{k}_{j=0} t_{j}} x^{i} ,
 \sum^{k}_{i =0} \frac{t_{i}}{ \sum^{k}_{j=0} t_{j}} y^{i}\right) \to  f(x^{*}, y^{*})$.

Our convergence results were confirmed 
by our numerical experiments in examples of 
a linear program in inequality form, 
a least-squares problem with $\ell_{1}$ regularization,
a matrix game, and a robust Markowitz portfolio construction problem.
We also compared our iterate scheme (associated with a not summable 
but square summable sequence $(t_{k})_{k \in \mathbf{N}}$  of step sizes)
with iterate schemes associated with constant step sizes on a Lagrangian of 
an easy convex constrained optimization problem. 
Our numerical result showed  some benefits of
replacing constant step sizes with our step sizes $(t_{k})_{k \in \mathbf{N}}$. 
Additionally, in our numerical experiments, we  displayed 
the convergence  $f(x^{k}, y^{k}) \to f(x^{*}, y^{*})$ in multiple examples, 
which currently lacks theoretical support, to the best of our knowledge.  
This motivates our future work on theoretical proof of the convergence of 
$(x^{k}, y^{k}) \to (x^{*}, y^{*})$ and $f(x^{k}, y^{k}) \to f(x^{*}, y^{*})$.
 
  \section*{Acknowledgments}
  Hui Ouyang acknowledges the support of
  the Natural Sciences and Engineering Research Council of Canada (NSERC), 
  [funding reference number PDF – 567644 – 2022]. 
  
 \addcontentsline{toc}{section}{References}
 \clearpage
 \bibliographystyle{abbrv}
 \bibliography{ccspp_subgradient}
 \clearpage
\end{document}